\newtheorem{theorem}{Theorem}[section]
\newtheorem{corollary}{Corollary}
\newtheorem{lemma}[theorem]{Lemma}
\newtheorem{proposition}{Proposition}
\theoremstyle{definition}
\newtheorem{definition}[theorem]{Definition}
\newtheorem{remark}{Remark}
\newcommand{%
\def\svgwidth{\columnwidth}
\import{./figures/}{.pdf_tex}
}[2]{%
\def\svgwidth{#1\columnwidth}
\import{./figures/}{#2.pdf_tex}
}
\newcommand{\Z}{\ensuremath{\mathbb{Z}}}
\newcommand{\R}{\ensuremath{\mathbb{R}}}
\newcommand{\B}{\ensuremath{\mathcal{B}}}
\newcommand{\T}{\ensuremath{\mathcal{T}}}
\newcommand{\dd}{\mathop{}\!\mathrm{d}}
\DeclareMathOperator*{\argmin}{arg\,min}
\title[Frenkel-Kontorova models on quasi-crystals] 
      {On the existence of solutions for the Frenkel-Kontorova models on quasi-crystals}
\author[Jianxing Du and Xifeng Su]{}
 \email{jxdu000@gmail.com}
 \email{xfsu@bnu.edu.cn}
\begin{document}
\maketitle

\centerline{\scshape Jianxing Du and Xifeng Su}
\medskip
{\footnotesize

} 

\bigskip


\begin{abstract}
  This article  focuses on recent investigations on equilibria of the Frenkel-Kontorova models subjected to potentials generated by quasi-crystals. 

We present a specific one-dimensional model with an explicit potential driven by the Fibonacci quasi-crystal. For a given positive number $\theta$, we show that there are multiple equilibria with rotation number $\theta$, e.g., a minimal configuration and a non-minimal equilibrium configuration. Some numerical experiments verifying the existence of such equilibria are provided. 
\end{abstract}

\section{Introduction}
We consider the Frenkel-Kontorova models with \emph{quasi-periodic} potentials. One may refer to \cite{BK2004} for several physical interpretations of such models.

In order to give a unified picture of the known results, let's take the one-dimensional quasi-crystals case for example (see \cite{Sadun2008} for more advanced studies on general quasi-crystals). Frenkel-Kontorova models describe the dislocations of particles deposited over a substratum given by the quasi-crystals (see \cite{BK2004}). That is , we model the position of the $i$-th particle by $x_i\in\R$ and the total energy is formally composed of the summation over all $i\in \mathbb{Z}$ of spring potential $\frac{1}{2} (x_{i}-x_{i+1})^2$ between the nearest neighbors $i$-th and the $(i+1)$-th particles,
and the potential $V(x_i)$ from interaction of the $i$-th particle with the quasi-periodic substratum:
 \[
 \mathcal{S}((x_i)_{i\in \mathbb{Z}}):=\sum_{i\in\Z} \left[ \frac{1}{2} (x_{i}-x_{i+1})^2+V(x_i) \right].
 \] 
In this article, we are concerned with the case that the potential function $V$ is a pattern equivariant function (see Definition~\ref{pattern equivariant functions} for details) and aim to discuss existing approaches to prove the existence of equilibria. 
Related problems of finding equilibria generated by other types of quasi-periodic functions are discussed in \cite{SuL2012, SuL2017} and references therein. \cite{Lions2003} presents an example to  show the non-existence of bounded correctors in the setting of homogenization.

The aperiodicity will bring us difficulties such as loss of compactness and less results are known for quasi-periodic than for periodic. We now recall the existing results in the literature.
In \cite{Aubry1990,Trevino2019}, under the assumptions that the potential function is large enough, 
the authors use the idea of the anti-integrable limits to obtain multiple equilibria with any prescribed rotation number or without any rotation numbers. This approach also works for higher dimensional generalization of the quasi-periodic Frenkel-Kontorova model but requires that the corresponding system is  far away from the integrable case. 

In another direction, \cite{GGP2006} considers the one-dimensional Fibonacci quasi-crystals without extra assumption on whether the system is integrable or not, and  mainly uses topological methods to establish the existence of the minimal configurations with any given rotation number. Here it is rather essential that the configuration space is one dimension. Note that the minimal configuration is a special equilibrium with extra properties.
One may refer to \cite{GPT2017} for other methods to search minimal configurations.

In this article, we construct an explicit potential $V$ driven by the Fibonacci quasi-crystals.
Our goal here is to provide several approaches to tackle variational problem for the Frenkel-Kontorova models. More precisely, we aim to find minimal configurations and non-minimal equilibrium configurations of the Frenkel-Kontorova model with quasi-periodic potentials.  

In order to give a simple and self-contained proof, we fix at the beginning a positive number $\theta=(3\tau+1)/2$ where $\tau=(\sqrt{5}+1)/2 =1.618\cdots$.
We state the main result as follows:
\begin{theorem}\label{maintheorem}
	For the Frenkel-Kontorova model with the potential $V$ (which is defined in Section \ref{EP}),  
	\begin{enumerate}[(i)]
		\item 	there exists a minimal configuration with the rotation number $\theta$, 
		\item   there exists a non-minimal equilibrium configuration  with the rotation number $\theta$.
	\end{enumerate}
\end{theorem}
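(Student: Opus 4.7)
The plan is to prove both parts by Aubry-Mather-style variational techniques, approximating the irrational rotation number $\theta=(3\tau+1)/2$ by rationals and transferring existence from the (easier) periodic setting to the quasi-crystal setting via compactness arguments that exploit the pattern-equivariance of $V$.

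For part (i), I would approximate $\theta$ by a sequence of rationals $p_n/q_n\to\theta$, with Fibonacci-type convergents being the natural choice given the substrate structure. For each $p_n/q_n$, minimize the finite-segment energy $\sum_{i=0}^{q_n-1}\bigl[\tfrac12(x_i-x_{i+1})^2+V(x_i)\bigr]$ over configurations satisfying the twist boundary condition $x_{q_n}=x_0+p_n$. Coercivity of the elastic term and lower semicontinuity of the functional yield a minimizer, which satisfies the discrete Euler-Lagrange equation $x_{i+1}+x_{i-1}-2x_i=V'(x_i)$; standard a priori estimates force successive differences $x_{i+1}-x_i$ to cluster around $p_n/q_n$. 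After a translation normalization (e.g.\ $x_0\in[0,1)$), the pattern-equivariance of $V$ with respect to the Fibonacci tiling provides enough uniform control to extract a locally uniformly convergent subsequence; the limit is a minimal configuration with rotation number $\theta$ in Aubry's sense.

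For part (ii), I would apply the classical dichotomy of Aubry-Mather theory to the set $\mathcal{M}_\theta$ of minimal configurations constructed in (i): either $\mathcal{M}_\theta$ projects onto the full real line (the KAM case) or its projection has gaps. In the gap case, Bangert's heteroclinic construction produces an equilibrium asymptotic to the two minimizers $x^\pm$ bounding a gap, obtained by minimizing the renormalized energy over configurations with these asymptotic data; such a heteroclinic inherits rotation number $\theta$ yet sits strictly inside the gap on a bounded window, so it cannot be a global minimizer. In the KAM case, I would instead run a finite-segment mountain-pass argument between a minimizer and its unit translate --- both local minima of the restricted functional --- and pass to the limit as the window grows to produce a saddle equilibrium. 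For the explicit $V$ of Section~\ref{EP}, I expect the gap case to hold, so Bangert's construction is the primary tool.

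The main obstacle is the loss of compactness inherent to the aperiodic setting: along approximating sequences of critical points, translational drift could destroy the prescribed rotation number in the limit. The pattern-equivariance of $V$ is the crucial ingredient used to counter this, yielding a quantitative quasi-invariance under integer translations (controlled by the Fibonacci combinatorics) that permits consistent re-centering. A secondary difficulty in (ii) is verifying that the non-minimal equilibrium is genuinely distinct from every element of $\mathcal{M}_\theta$; for the heteroclinic construction this follows from the gap-passing property, while for the mountain pass it requires a positive lower bound on the height of the energy barrier, which for our explicit $V$ can in principle be computed or estimated directly.
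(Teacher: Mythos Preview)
Your plan for part (i) is a plausible outline, though it differs from the paper's route. The paper does \emph{not} use rational periodic approximation; instead it follows the branched-manifold approach of Gambaudo--Guiraud--Petite: for each level $l$ of the Fibonacci substitution one builds a branched $1$-manifold $\B_l$ (two circles of lengths $\tau^{2l+1}$, $\tau^{2l+2}$ meeting at a point), minimizes the energy of a finite segment on each circle, lifts back to $\R$, and passes to the limit in $l$. The combinatorics of the substitution (Proposition~\ref{prop: the  combinatorics}) supplies the uniform spacing bound that gives compactness and controls the rotation number in the limit. Your rational approach could presumably be made to work, but the ``translation normalization $x_0\in[0,1)$'' and the assertion that pattern-equivariance ``provides enough uniform control'' are exactly where the aperiodicity bites, and you have not said what replaces the $\Z$-translation symmetry that drives classical Aubry--Mather compactness.

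For part (ii) there is a genuine gap. Your argument leans on structures that exist only in the \emph{periodic} setting. The KAM/Cantor dichotomy for $\mathcal M_\theta$ is a statement about the quotient of the minimal set by the $\Z$-action $x_i\mapsto x_i+1$; here $V$ has no nontrivial period, so there is no such action, no well-defined ``projection'' whose image is a circle or a Cantor set, and no pair of neighbouring minimizers bounding a gap in the classical sense. Likewise ``a minimizer and its unit translate'' are simply not both minimizers, so the mountain-pass endpoints you propose do not exist. Bangert's heteroclinic construction and the mountain-pass alternative both presuppose this translational symmetry, and nothing in the proposal indicates how to recover it from pattern-equivariance alone.

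The paper takes a completely different and much more direct route for (ii): it uses the anti-integrable limit. One sets $g(i)$ to be the point of the Fibonacci chain $S$ nearest to $\theta i$, so that each $g(i)$ is a local \emph{maximum} of $V$, and then runs a contraction-mapping argument on $\ell^\infty$ to find an equilibrium $u$ with $|u_i-g(i)|$ small enough that every $u_i$ still lies in the concave (quadratic) cap of $V$. Non-minimality is then shown by an explicit local comparison (Lemma~\ref{lemma: nonminimal}): for some index $i_0$ one exhibits a single competitor $u_0'$ with $H(u_{-1},u_0')+H(u_0',u_1)<H(u_{-1},u_0)+H(u_0,u_1)$, using only the concavity of $V$ near $g(i_0)$ and the equilibrium equation. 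No Mather-set structure, heteroclinics, or mountain-pass machinery is needed.
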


\begin{remark}
\begin{itemize}
	\item [(1)] We choose $\theta=(3\tau+1)/2$ for computational convenience. In fact, Theorem \ref{maintheorem} is valid for any $\theta\in\R$.
	\item [(2)] Related results could also hold for other one-dimensional quasi-periodic tilings.
	\end{itemize}
\end{remark}

But so far, analogues of the above result in higher dimensions  are not yet known. 
We hope to get some inspiration from the specific example and corresponding numerical simulations. For numerical computations of minimal configurations, one may refer to \cite{Llave2013}.

\subsection*{Organization of the article}
In Section~\ref{pre}, we  introduce some necessary fundamentals about quasi-crystals and the variational problem for the Frenkel-Kontorova models. In particular, we  give an example of the quasi-crystals, the Fibonacci chain, and an example of the Frenkel-Kontorova models with quasi-periodic potentials. 

Section~\ref{solutions} is devoted to the proof of  Theorem~\ref{maintheorem}. Numerical simulations searching for  equilibrium configurations are provided here.

In fact, in Section~\ref{sec : minimal configurations}, we obtain the minimal configurations for the Frenkel-Kontorova models on one-dimensional Fibonacci quasi-crystals with slight modifications compared with those in \cite{GGP2006}. Thus we finish the proof of item (i) in Theorem~\ref{maintheorem}. Of course, the KAM circles are minimizers.

We  apply the idea of anti-integrable limits to prove the existence of equilibrium configurations of type $h$ in Section~\ref{existence of equi conf}. Then we  prove that these equilibrium configurations are non-minimal, which completes the proof of item (ii) in Theorem~\ref{maintheorem}.

\section{Preliminaries}\label{pre}
We recall several standard notions of quasi-crystals in Section~\ref{notion: quasi-crystal}.  In Section~\ref{VP}, we introduce the minimal and equilibrium configurations of  the variational problem for the Frenkel-Kontorova models. Particularly, in Section~\ref{EP},  we state the definition of pattern equivariant potential which we take as the potential in the variational problem, and finally an example is provided.

\subsection{Quasi-crystals}\label{notion: quasi-crystal}
In the $d$-dimensional Euclidean space  $\R^d$,  the  \emph{open ball} centered at $x$ with radius $r$ is denoted by $B_r(x)$. The \emph{closure} of a set $A$ and  its \emph{cardinality}  are denoted by  $\overline{A}$ and $\operatorname{Card}(A)$, respectively. The \emph{Lebesgue measure} of a Lebesgue-measurable set  $A$ is denoted by $\lambda(A)$.  Countable subsets in $\R^d$ are called \emph{point sets}. 

A point set $\Lambda\subset \R^d$ is \emph{uniformly discrete} if there exists  $r>0$ such that $(x+B_r(0))\cap(y+B_r(0))=\emptyset$ holds for all distinct $x,y\in\Lambda$, where the sign ``$+$'' is the \emph{Minkowski sum}. A point set $\Lambda$ is \emph{relative dense} if there exists $R>0$ such that $\Lambda+\overline{B_R(0)}=\R^d$.

\begin{definition}[Delone sets]
A point set $\Lambda\subset \R^d$ is  \emph{Delone} if it is both uniformly discrete and relative dense.
\end{definition}

A \emph{cluster} of the point set $\Lambda$ is the intersection $K\cap \Lambda$ for some compact set $K\subset\R^d$. In particular, if the compact set $K$ is convex, we call such cluster $K\cap \Lambda$ a \emph{patch}. Two clusters $P_1$ and $P_2$  are said to be  \emph{equivalent} if there exists a vector $v\in\R^d$ such that $P_1+v=P_2$.


\begin{definition}[finite local complexity]
A point set $\Lambda\subset \R^d$ is of  \emph{finite local complexity} if for any $M>0$, the point set $\Lambda$ possesses only finitely many equivalence classes of clusters with diameters smaller than $M$, where the diameter of a subset $A\subset \R^d$ is $\sup_{x,y\in A}|x-y|.$ 
\end{definition}

\begin{definition}[repetitive]
	A point set $\Lambda\subset \R^d$ is \emph{repetitive} if for any cluster $P\subset \Lambda$, there exists $R>0$ such that  any ball with radius $R$ contains a cluster equivalent to $P$.
\end{definition}

\begin{definition}[non-periodic]
A point set $\Lambda\subset\R^d$ is \emph{non-periodic} if  $t+\Lambda\ne \Lambda$ for any $t\in\R^d\backslash\{0\}$.
\end{definition}

\begin{definition}[quasi-crystal]
 A point set $\Lambda\subset\R^d$ is called a \emph{quasi-crystal} if it is repetitive,  non-periodic and of finite local complexity.
\end{definition}

Let $K\subset\R^d$ be a compact set and fix a cluster $P=\Lambda\cap K$ of a point set $\Lambda$. Given a ball $B_r(a)$, we consider the quantity
\[\frac{\operatorname{Card} \{t\in B_r(a)\mid (-t+\Lambda)\cap K=P\}}{\lambda(B_r(a))}.\]
If the above quantity converges as $r\rightarrow\infty$ with $a\in\R^d$ fixed, we call the limit the \emph{absolute frequency} of the cluster $P$ in $\Lambda$ at $a$  and denote it by $\operatorname{Freq}_a(P)$. If the convergence is uniform in $a$, the limit is called the \emph{uniform absolute frequency} of $P$ and is denoted by $\operatorname{Freq}(P)$.

 \subsubsection{The Fibonacci chain as a specific one-dimensional quasi-crystal}\label{sec:fibonaccichain}
In this section, we will provide the Fibonacci chain as an example of one-dimensional quasi-crystals whose construction will be divided into the following three steps.

\textbf{Step 1. Construct  a one-sided Fibonacci word.} Consider a two-letter alphabet $\{a,b\}$ and the free group $\langle a,b\rangle$ generated by letters $a$ and $b$. A \emph{substitution rule} $\rho$ on $\{a,b\}$ is an endomorphism of $\langle a,b\rangle$. Specifically, we consider the substitution rule \[\rho:\begin{array}{cc} a\mapsto ab\\ b\mapsto a \end{array}\] and define a sequence $(u^{(i)})_{i\in\mathbb{Z}^+}$ of finite words  by 
\begin{equation}\label{definition of u^(i)}
u^{(i+1)}=\rho\left(u^{(i)}\right)\,\forall i\geq 1 \text{ with } u^{(1)}=a.
\end{equation}
The iterating process can be illustrated as follows:
$$
\begin{aligned}
a \stackrel{\rho}{\longmapsto}  a b \stackrel{\rho}{\longmapsto}  a b a \stackrel{\rho}{\longmapsto} a b a  a b  \stackrel{\rho}{\longmapsto}  abaababa
\stackrel{\rho}{\longmapsto} abaababaabaab \stackrel{\rho}{\longmapsto} \cdots.
\end{aligned}
$$

It can be directly derived from the definition by induction that  
\begin{equation}\label{eq: induction of u}
u^{(i+2)}=u^{(i+1)}u^{(i)} \text{  for all } i\geq 1.
\end{equation}
Thus the sequence $(u^{(i)})_{i\in\mathbb{Z}^+}$ converges  to an infinite word 
\[u:=u_0u_1u_2\cdots:=abaababaabaab\cdots\]
as $i\rightarrow +\infty$ in the product topology of $\{a,b\}^{\mathbb{Z}^+}$, where $u_0, u_1, \dots, u_j, \dots\in\{a,b\}$. 
The limit $u$ is a fixed point of $\rho$ and is called the \emph{one-sided Fibonacci word}. 

\begin{remark}	
Notice that the length of $u^{(i)}$ is the \emph{Fibonacci number} $f_i$ defined by $f_1=1$, $f_2=2$ and $f_{i+2}=f_{i+1}+f_{i}$ for all $i\geq 1$. As a convention, let $f_0=1, f_{-1}=0$ and $\tau=(1+\sqrt{5})/{2}$. It is easy to show that 
\begin{equation}\label{general term formula of Fibonacci}
f_i=\frac{\tau^{i+1}-(1-\tau)^{i+1}}{\sqrt{5}}.
\end{equation}
Furthermore, the number of times the letter $a$ appears in $u^{(i)}$ is $f_{i-1}$ and the number of times the letter $b$ appears in  $u^{(i)}$ is $f_{i-2}$.
\end{remark}

\textbf{Step 2. Construct  a two-sided Fibonacci word.} Define a sequence\\ $(w^{(i)})_{i\in\mathbb{Z}^+}$ of finite words by $w^{(i)}=u^{(i)}|u^{(i)}$: 
$$
\begin{aligned}
a|a &\stackrel{\rho}{\longmapsto} \underline{a b}| a b \stackrel{\rho}{\longmapsto} a \underline{b a}|a b a \stackrel{\rho}{\longmapsto} a b a \underline{a b}| a b a a b \stackrel{\rho}{\longmapsto}  abaaba\underline{ba}|abaababa\\
&\stackrel{\rho}{\longmapsto} abaababaaba\underline{ab}|abaababaabaab \stackrel{\rho}{\longmapsto} \cdots
\end{aligned}
$$ 
where the vertical line indicates the reference point. In fact, we can get palindromes by eliminating the last two letters of $u^{(i)}$ for all $i\geq 3$, see \cite{AB2010}. 
Thus we have \[w^{(i)}=\begin{cases}
b^{-1}a^{-1}\widetilde{u^{(i)}}\underline{ba}|u^{(i)} &\text{ if } i \text{ is  odd, }\\
a^{-1}b^{-1}\widetilde{u^{(i)}}\underline{ab}|u^{(i)} &\text{ if } i \text{ is  even, }
\end{cases}\] for all $i\geq 3$, where $\widetilde{u^{(i)}}$ denotes the reversal of $u^{(i)}$.
Furthermore $(w^{(i)})_{i\in\mathbb{Z}^+}$ has two limit points that are 2-periodic points under the substitution $\rho$: 
 $$
 \begin{aligned}
\widetilde{u}\underline{ba}|u:=&\cdots abaababaabaababaaba\underline{ba}|abaababaabaababaababa\cdots\\
\widetilde{u}\underline{ab}|u:=&\cdots abaababaabaababaaba\underline{ab}|abaababaabaababaababa\cdots.
\end{aligned}
$$

That is, the two bi-infinite words are fixed points of $\rho^2$. Here we only consider the first bi-infinite word whose underlined position is $\underline{ba}$, and  we denote it by \[w:=\cdots w_{-3}w_{-2}w_{-1}|w_{0}w_{1}w_{2}w_{3}\cdots:=\widetilde{u}\underline{ba}|u=\cdots aba\underline{ba}|abaab\cdots\] where $w_j\in\{a,b\}$ for all $j\in\Z$.  

\textbf{Step 3. Obtain a point set by the bi-infinite word as a quasi-crystal.} Let $w_{[k,l]}$  be the finite subword of $w$ from position $k$ to $l$ where $k,l\in\Z $ and $k\leq l$. Note that $w_{[k, k]}=w_{k}\in \{a, b\}$.  We then define an assignment function $|\cdot|$ on any finite word of alphabet $\{a,b\}$ by
\[
|x_1x_2|=|x_1|+|x_2|\text{ for any two finite words $x_1$ and $x_2$},
\] with $|a|=\tau$, $|b|=1$. Note that this is a cocycle in the free group.

Then we can recursively define a bi-infinite sequence $S:=(S_i)_{i\in\Z}\in\R^\Z$ by $S_i=S_{i-1}+|w_{i-1}|$ with $S_0:=0\in\R$ and $w_{i-1}\in \{a, b\}$ constructed in Step 2. 
The range of the sequence $S:\Z\to\R$,  also denoted by $S$, is called the \emph{Fibonacci chain}. 

\begin{remark}
	Our discussion will concern three kinds of words - geometric words, symbolic words and interval words. We will use the following example to illustrate the relationship among these three kinds of words. The geometric word    $S\cap[-\tau, \tau+1]$ is the point set $\{-\tau, 0,\tau, \tau+1\}$ and its corresponding symbolic word is $a|ab$ which is more visually appealing. It is obvious that geometric and symbolic words can be transformed into each other. The corresponding interval word is the left-closed and right-open interval $[-\tau,\tau+1)$. The geometric and symbolic words can be transformed into the interval words but not vice versa.
\end{remark}

\begin{lemma}\label{lemma: quasi-crystal}
The Fibonacci chain $S$ is a quasi-crystal, so we also call $S$ the Fibonacci quasi-crystal.
\end{lemma}

\begin{proof}
	We will prove the lemma by checking the definition of quasi-crystals.

(i). We will first show that $S$ is repetitive. Suppose that $P=K\cap S$ is a cluster of $S$ where $K$ is a  compact set in $\R$. Then $P$ is a finite subset of $S$ and we denote the corresponding index set by  $I = \{ i\in \mathbb{Z}  : S_i\in P  \}$. 
Consider the finite word $w^P:=w_{[\min I-1,\max I]} $ corresponding to $P$. There exists $i\in\mathbb{Z}^+$ such that $w^P$ is a subword of $w^{(i)}$. Since $w^{(1)}$ is a subword of $u^{(4)}$, by induction, $w^{(i)}$ is a subword of $u^{(i+3)}$. The word $bb$ never occurs in $w$ since $b$ only appears in $ab$ as an image of $\rho$. Thus any $2$-letter subword of $w$ contains $a$.  Any $6$-letter subword of $w$ contains two images of some letter under $\rho$ and so contains $\rho(a)$. By induction, any $(2^{i+3}-2)$-letter subword of $w$ contains $\rho^{i+2}(a)=u^{(i+3)}$ and thus contains $w^P$. Any ball of radius $2^{i+2}\tau$ in $\R$ contains at least $2^{i+3}$ points in $S$ and the corresponding $(2^{i+3}-1)$-letter subword of $w$ contains $w^P$. Hence $P$ is contained in any ball of radius $2^{i+2}\tau$.

(ii). Obviously, $S$ is of finite local complexity since $|S_i-S_{i-1}|$ ranges on $\{\tau,1\}$ for all $i\in\Z$ and there are only a limited number of arrangements of intervals in a bounded range. 

(iii). If $S$ has non-zero period, then $w=\lim\limits_{n\rightarrow +\infty} x^n$ for some finite word $x$. Then the frequency of the letter $a$ in $w$ would be the quotient of the number of letter $a$ in $x$ divided by the length of $x$, which is rational. But on the other hand, the frequency of the letter $a$ is 
\[\lim_{i\to\infty}\frac{\text{ the number of letter $a$ in }u^{(i)}}{\text{ the length of }u^{(i)}}=\lim_{i\to\infty}\frac{f_{i-1}}{f_i}=\frac{1}{\tau}\]
and is irrational, which is a contradiction.
\end{proof}
{\color{red}
}
In conclusion, we have constructed  the Fibonacci quasi-crystal $S$ which is a geometric word associated with the bi-infinite symbolic word $w$.

\subsection{The variational framework for the Frenkel-Kontorova model}\label{VP}

We consider the space $\R^\Z$ of bi-infinite real-valued sequences with the product topology. An element $x\in\R^\Z$ is denoted by $(x_i)_{i\in\Z}$ and is sometimes called a \emph{configuration}.

Given a function $H:\R^2\rightarrow\R$ we extend $H$ to arbitrary finite segments\\ $(x_j, \dots, x_k), j<k$, of configuration $x\in\R^\Z$ by \[H(x_j, \dots, x_k):=\sum_{i=j}^{k-1}H(x_i,x_{i+1}).\] We say that the segment $(x_j, \dots, x_k)$ is \emph{minimal} with respect to $H$ if \[H(x_j, \dots, x_k)\leq H(x_j^*, \dots, x_k^*)\] for all $(x_j^*, \dots, x_k^*)$ with $x_j=x_j^*$ and $x_k=x_k^*$.

\begin{definition}[minimal configuration]
	A configuration $x\in\R^\Z$ is \emph{minimal} if every finite segment of $x$ is minimal. 
\end{definition}

\begin{definition}[stationary configuration]
	If $H$ is $C^2$, we say that $x\in \R^\Z$ is \emph{a stationary configuration or an equilibrium configuration} if \begin{equation}\label{eq:equilibrium}
		\partial_2 H(x_{i-1}, x_i)+\partial_1 H(x_{i}, x_{i+1})=0\text{ for all } i\in\Z.
	\end{equation} 
\end{definition}
Obviously, each minimal configuration is a stationary configuration. 

In this article, we take the funciton $H$ as follows:
\begin{equation}\label{eq:interaction}
	H(\xi,\eta)=\frac{1}{2}(\xi-\eta)^2+V(\xi).
\end{equation}
Then (\ref{eq:equilibrium}) becomes
\[2x_i-x_{i-1}-x_{i+1}+V'(x_i)=0 \quad \text{ for all } i\in\Z.\] 
In order to further describe the  configurations, we introduce the following two notions.
\begin{definition}[rotation number]
	Let $\rho\in\R$. A configuration $x\in\R^\Z$ has a rotation number equal to $\rho$ if the limit 
	\[\lim_{i\to\pm\infty}\frac{x_i}{i}=\rho.\]
\end{definition}
\begin{definition}[type-$h$ configurations]
Let $h:\Z\rightarrow\R$. A configuration $x\in\R^\Z$ is \emph{type-$h$} if \[\sup_{i\in\Z}|x_i-h(i)|<\infty.\]
\end{definition}
 It is easy to see that the notion of the type-$h$ configurations  is more general than that of the rotation number. Taking $h(i)=\rho i$ for instance, any configuration of type-$h$ has a rotation number $\rho$.
 
\subsection{The equivariant potential}\label{EP}
In this section, we aim to build some special equivariant potential generated by the Fibonacci quasi-crystal.
We will first introduce the following notion of ``equivariant'' in our settings.
\begin{definition}\label{pattern equivariant functions}
	For any point set $\Lambda\subset\R^d$, we say a continuous function $f:\R^d\rightarrow\R$ is  $\Lambda$-\emph{equivariant}  if there exists $R>0$ such that if $x,y\in\R^d$ satisfy \[(\Lambda-x)\cap B_R(0)=(\Lambda-y)\cap B_R(0)\] then $f(x)=f(y)$.
\end{definition}

Given $x\geq 0$, we denote
\[
\alpha(x) := \max\{ y\in S ~|~ y\leq x\}, \qquad \beta(x) :=  \min\{ y\in S ~|~ y >  x\}.
\]
It is easy to see that both $\alpha$ and $\beta$ are right-continuous step functions.

For all $i\in\mathbb{Z}^+$, we can regard  $u^{(i)}$ defined in \eqref{definition of u^(i)} as a left-closed and right-open interval in $\R$, and one can always find some $i(x)\in\mathbb{Z}^+$ such that $x\in u^{(i(x))} \setminus u^{(i(x)-1)}$ with $u^{(0)}=\emptyset$. 
Or, in other words, due to \eqref{general term formula of Fibonacci}, we have 
\begin{equation}
|u^{(i)}|=f_{i-1}\tau+(f_i-f_{i-1})=\tau^i  \quad \text{ for all }i\in\mathbb{Z}^+,
\end{equation}
 and so we get either 
 \[ 0\leq x < \tau \quad \text{ or } \quad \tau^{n_1}\leq x<\tau^{n_1+1} \text{ for some } n_1\in\mathbb{Z}^+.\]
Let us consider the later case, and we have $0\leq x-\tau^{n_1}< \tau^{n_1-1}$. 
 Therefore, considering $x-\tau^{n_1}$ instead of $x$, we get either 
 \[
 0\leq x-\tau^{n_1}< \tau \quad \text{ or } \quad \tau^{n_2}\leq x-\tau^{n_1}<\tau^{n_2+1} \text{ for some }n_2\in\mathbb{Z}^+.
 \] 
 Hence, we could repeat the above procedure finitely many times and  obtain either 
(i) $0\leq x< \tau$ or (ii) there exist finitely many $n_1,n_2, \dots, n_r \in \mathbb{Z}^+$ such that
\begin{equation}\label{eq: position of x}
0\leq x-\tau^{n_1}-\tau^{n_2}-\cdots-\tau^{n_r}< \tau.
\end{equation}

This means that  $x$ is covered either by the associated closed interval of $u^{(1)}=a$ or $u^{(n_1)}u^{(n_2)}\cdots u^{(n_r)}v$ where $v$ is an unknown word. By (\ref{eq: position of x}), it would be easy to see that $v$ could be $a$, $b$, $ba$ and $bb$.

Based on the discussion above, we could obtain the following facts.
\begin{lemma}
	 If $n_r > 1$, then $v=a$. If $n_r=1$, then $v=b$. Moreover, we have \begin{align*}
    &\beta(x)=\begin{cases}
      \alpha(x)+1, &\text{ if } n_r=1\\
      \alpha(x)+\tau,  &\text{ otherwise, } 
      \end{cases} \\ 
      \text{ where }
       &\alpha(x)=\begin{cases}
      0,& \text{ if } 0\leq x< \tau \\
      \tau^{n_1}+\tau^{n_1}+\cdots+\tau^{n_r}, & \text{ if }  x\geq \tau.
      \end{cases}
   \end{align*}
	 
\end{lemma}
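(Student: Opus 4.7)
My plan is to read off $\alpha(x)$ and the letter standing at $\alpha(x)$ directly from the decomposition of the one-sided Fibonacci word $u$ induced by the greedy representation $x=\tau^{n_1}+\cdots+\tau^{n_r}+\delta$ with $\delta\in[0,\tau)$. A first preparatory observation is that the greedy procedure automatically produces a Zeckendorf-type sequence with gaps $n_k-n_{k+1}\geq 2$: at the step producing $n_k$ the remainder lies in $[\tau^{n_k},\tau^{n_k+1})$, so the new remainder lies in $[0,\tau^{n_k+1}-\tau^{n_k})=[0,\tau^{n_k-1})$ (using $\tau^2=\tau+1$), which forces $\tau^{n_{k+1}}<\tau^{n_k-1}$. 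As a useful byproduct, when the procedure stops with $n_r=1$ one actually has $\delta<1$, whereas for $n_r\geq 2$ the weaker bound $\delta<\tau$ is all that is needed.

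The core of the argument is the following structural claim, proved by strong induction on $r$: for every Zeckendorf-type sequence $n_1>n_2>\cdots>n_r\geq 1$ with gaps at least $2$, the finite word $u^{(n_1+1)}$ begins with the concatenation $u^{(n_1)}u^{(n_2)}\cdots u^{(n_r)}$, and the letter immediately following this prefix is $a$ when $n_r\geq 2$ and $b$ when $n_r=1$. The base $r=1$ follows from $u^{(n_1+1)}=u^{(n_1)}u^{(n_1-1)}$ for $n_1\geq 2$ (the next letter is the first letter $a$ of $u^{(n_1-1)}$) and from $u^{(2)}=ab$ for $n_1=1$. For the inductive step, the hypothesis applied to $(n_2,\ldots,n_r)$ produces the desired prefix-and-next-letter structure inside $u^{(n_2+1)}$; the elementary fact that $u^{(j)}$ has $u^{(k)}$ as a prefix whenever $j\geq k$ (immediate from iterating \eqref{eq: induction of u}), together with the gap inequality $n_1-1\geq n_2+1$, transfers this structure to $u^{(n_1-1)}$, and substituting into $u^{(n_1+1)}=u^{(n_1)}u^{(n_1-1)}$ closes the induction.

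Granting the structural claim, the lemma follows quickly. Since $|u^{(i)}|=\tau^i$, the prefix $u^{(n_1)}\cdots u^{(n_r)}$ has total $\R$-length $\sum_k\tau^{n_k}$, so this sum equals the partial sum $S_N$ of consecutive letter-lengths for $N=\sum_k f_{n_k}$ and therefore lies in $S$. The stopping inequality $\delta<\tau$ (respectively $\delta<1$ when $n_r=1$) guarantees that the next point of $S$ lies strictly beyond $x$, so $\alpha(x)=\sum_k\tau^{n_k}$; the structural claim then identifies the letter $v=w_N$ occupying the interval $[\alpha(x),\beta(x))$ as $a$ when $n_r\geq 2$ and as $b$ when $n_r=1$; and $\beta(x)=\alpha(x)+|v|$ yields the announced formulas. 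The separately listed case $0\leq x<\tau$ is immediate, since $w_0=a$ occupies $[0,\tau)$.

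The main obstacle is the structural induction, in which three ingredients must be coordinated simultaneously: the recursion $u^{(i+1)}=u^{(i)}u^{(i-1)}$, the prefix monotonicity ``$u^{(k)}$ is a prefix of $u^{(j)}$ for $k\leq j$'', and the gap condition $n_k\geq n_{k+1}+2$. The gap condition is essential, because without $n_1-1\geq n_2+1$ one cannot transport the prefix-and-next-letter structure across one step of the recursion; this is precisely where the greediness of the decomposition earns its keep.
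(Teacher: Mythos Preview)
Your proof is correct and uses essentially the same mechanism as the paper—the recursion $u^{(i+1)}=u^{(i)}u^{(i-1)}$ combined with the gap property $n_k\ge n_{k+1}+2$ of the greedy expansion—to identify the letter immediately following the prefix $u^{(n_1)}\cdots u^{(n_r)}$. You organize this as a clean structural induction on $r$, making the Zeckendorf-type gap condition and the sharpened bound $\delta<1$ in the case $n_r=1$ explicit, whereas the paper treats the cases $n_r=1$ and $n_r>1$ separately and leaves the justification in the first case largely to the figure; the underlying argument is the same.
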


\begin{proof}
One could immediately have the expression of $\alpha(x)$. It then suffices to analyze the position of $x$  in the quasi-crystal rinto the following two cases(see Figure \ref{fig:positionofpoint}):
\begin{description}
\item [Case 1.] If $n_r=1$ and so $n_r+1=2$, we have either $x-\tau^{n_1}-\cdots-\tau^{n_{r-1}}-\tau^{n_r} = 0$, that is $x$ is just both the right end point of the associated interval of $a$ and the left end point of the associated interval of $b$ or $0< x-\tau^{n_1}-\cdots-\tau^{n_{r-1}}-\tau^{n_r} <\tau$, that is $x$ is inside the associated interval of $b$. In both cases, we have $\beta(x) - \alpha(x) = 1$.

\item [Case 2.] If $n_r > 1$, we obtain $u^{(n_r +1)} = u^{(n_r)} u^{(n_r -1)} $ due to \eqref{eq: induction of u}, that is, $x$ is covered by $u^{(n_1)}\cdots u^{(n_r-1)}u^{(n_r+1)}$ but not $u^{(n_1)}\cdots u^{(n_r-1)}u^{(n_r)}$.
Therefore, $v$ must be a front part of $u^{(n_r-1)}$. Then the first letter of $v$ is $a$ and $v=a$ and so $\beta(x) - \alpha(x) = \tau$.
\end{description}

\begin{figure}[htp]
\centering

\begin{tikzpicture}
\foreach \t in {1/2, 1/2+1.618, 2*1.618+3/2, 3*1.618+3/2, 4*1.618+5/2} {\draw[thick,red] (\t,0) -- (\t+1.618,0);
	\draw[thick,red] (\t, 0) -- (\t, 0.1);
}
\foreach \t in {2*1.618+3/2, 4*1.618+5/2} {\draw[thick,blue] (\t-1,0) -- (\t,0);
	\draw[thick,blue] (\t-1, 0) -- (\t-1, 0.1);
}
\draw[thick,blue] (0,0) -- (1/2,0);
\draw[thick,blue] (5*1.618+5/2,0) -- (5*1.618+3,0);
\draw[thick,blue] (5*1.618+5/2,0) -- (5*1.618+5/2,0.1);
\fill (4*1.618+2.1,0) circle (2pt);
\node[above] (x) at (4*1.618+2.1,0) {$x$};
\draw [decorate,decoration={brace,amplitude=5pt,mirror,raise=1ex}]
(1/2+1.618,0) -- (3/2+3*1.618,0) node[midway,yshift=-1.5em]{$u^{(n_{r-1})}$};	
\draw [decorate,decoration={brace,amplitude=5pt,mirror,raise=1ex}]
(3/2+3*1.618,0) -- (3/2+4*1.618,0)   node[midway,yshift=-1.5em]{$u^{(n_r)}$};
\draw [decorate,decoration={brace,amplitude=5pt,mirror,raise=1ex}]
(3/2+4*1.618,0) -- (5/2+4*1.618,0)   node[midway,yshift=-1.5em]{$v$};
\begin{scope}
\clip (0,0.0) rectangle(1/2+1.618,-2);
\draw [decorate,decoration={brace,amplitude=5pt,mirror,raise=1ex}] (-1,0) to (1/2+1.618,0) node [midway,yshift=-1.5em,xshift=2em]{$u^{(n_{r-2})}$};
\end{scope}
\end{tikzpicture}

\begin{tikzpicture}
\foreach \t in {1/2, 1/2+1.618, 2*1.618+3/2, 3*1.618+3/2, 4*1.618+5/2} {\draw[thick,red] (\t,0) -- (\t+1.618,0);
	\draw[thick,red] (\t, 0) -- (\t, 0.1);
}
\foreach \t in {2*1.618+3/2, 4*1.618+5/2} {\draw[thick,blue] (\t-1,0) -- (\t,0);
	\draw[thick,blue] (\t-1, 0) -- (\t-1, 0.1);
}
\draw[thick,blue] (0,0) -- (1/2,0);
\draw[thick,blue] (5*1.618+5/2,0) -- (5*1.618+3,0);
\draw[thick,blue] (5*1.618+5/2,0) -- (5*1.618+5/2,0.1);
\fill (3*1.618+2.1,0) circle (2pt);
\node[above] (x) at (3*1.618+2.1,0) {$x$};
\draw [decorate,decoration={brace,amplitude=5pt,mirror,raise=1ex}] (1/2+1.618,0) -- (3/2+3*1.618,0) node[midway,yshift=-1.5em]{$u^{(n_r)}$};	
\draw [decorate,decoration={brace,amplitude=5pt,mirror,raise=1ex}]
(3/2+3*1.618,0) -- (3/2+4*1.618,0)   node[midway,yshift=-1.5em]{$v$};
\begin{scope}
\clip (0,0.0) rectangle(1/2+1.618,-2);
\draw [decorate,decoration={brace,amplitude=5pt,mirror,raise=1ex}] (-1,0) to (1/2+1.618,0) node [midway,yshift=-1.5em,xshift=2em]{$u^{(n_{r-1})}$};
\end{scope}
\end{tikzpicture}
\caption{The position of a point $x$}
\label{fig:positionofpoint}
\end{figure}

\end{proof}


Notice that we have defined $\alpha(x)$ and $\beta(x)$ only for $x\geq0$. Since $w=\widetilde{u}ba|u$, we can extend $\alpha(x)$ and $\beta(x)$ on the entire real line $\R$ by \begin{align*}
  \alpha(x)&=\begin{cases}
    -\beta(-x),& \text{ if } -\tau^2\leq x< 0 \\
    -\beta(-x-\tau^2)-\tau^2, & \text{ if }  x< -\tau^2 
    \end{cases} \\ 
    \beta(x)&=\begin{cases}
      -\alpha(-x),& \text{ if } -\tau^2\leq x< 0 \\
      -\alpha(-x-\tau^2)-\tau^2, & \text{ if }  x< -\tau^2. 
      \end{cases}
\end{align*}

Let $\zeta:\R\rightarrow\R$ be a function given by 
\[\zeta(x)=
\begin{cases} 
\frac{64}{27} (3  |x|-1)^2 (96 |x| -11), & \text{if}\,x\in\left(-\frac{1}{3},-\frac{1}{4}\right)\cup\left(\frac{1}{4},\frac{1}{3}\right), \\
-64x^2+\frac{160}{27}, &  \text{if}\, x\in\left[-\frac{1}{4},\frac{1}{4}\right], 
\\
0, & \text{otherwise}.
\end{cases}
\]       

\begin{figure}[htp]
	\centering
\begin{tikzpicture}[thick, scale=3]
	\draw[thick, ->] (-1,0) -- (1,0)node  [right]{$x$};
	\draw[thick, ->] (0,0) -- (0,1.2)node  [right]{$y$};
	\draw[domain=-0.25:0.25] plot (\x, {(-64*\x*\x+(160/27))/7});
	\draw[domain=0.25:(1/3)] plot (\x, {((64/27)*(3*\x-1)*(3*\x-1)*(96*\x-11))/7});
	\draw[domain=(-1/3):-0.25] plot (\x, {((64/27)*(-3*\x-1)*(-3*\x-1)*(-96*\x-11))/7});
    \draw[dashed] (-1/4,{52/27/7})-- ({1/4}, {52/27/7});
    \draw[dashed] (1/4,{52/27/7})-- ({1/4}, {0});
    \draw[dashed] (-1/4,{52/27/7})-- ({-1/4}, {0});
    \node  at (1/3,-1/10) {$\frac{1}{3}$};
    \node  at (1/4,-1/10) {$\frac{1}{4}$};
    \node  at (0,-1/10) {$0$};
    \node  at (-1/4,-1/10) {$-\frac{1}{4}$};
    \node  at (-1/3-1/10,-1/10) {$-\frac{1}{3}$};
    \node  at (1/10,52/27/7+1/10) {$\frac{52}{27}$};
    \node  at (1/10,160/27/7+1/20) {$\frac{160}{27}$};
\end{tikzpicture}
\caption{The graph of $\zeta$}
\end{figure}

It is easy to check that $\zeta$ is $C^1$ everywhere and  $C^2$ except at $x=\pm1/3$.
The potential $V:\R\rightarrow\R$ of the interaction is defined by \begin{equation}\label{eq:potentialV}
	V(x)=
\begin{cases} 
\zeta(x-\alpha(x)), &\text{ if }2x\leq\alpha(x)+\beta(x),
\\
\zeta(x-\beta(x)), &\text{ if }2x>\alpha(x)+\beta(x).
\end{cases}
\end{equation}

\begin{figure}[htp]
	\centering
	\begin{tikzpicture}[thick, scale=1]
		\draw[thick,->] (0,0) -- (0,1.2) ;
		\foreach \t in {0, 1.618, 2.618, 4.236, -1.618, -2.618, -4.236}
		{
		\draw (\t,-0.05) -- (\t,0.05);			
		\draw[domain=(-0.25+\t):(0.25+\t)] plot (\x, {(-64*(\x-\t)*(\x-\t)+(160/27))/7});
		\draw[domain=(0.25+\t):(1/3+\t)] plot (\x, {((64/27)*(3*(\x-\t)-1)*(3*(\x-\t)-1)*(96*(\x-\t)-11))/7});
		\draw[domain=(-1/3+\t):(-0.25+\t)] plot (\x, {((64/27)*(-3*(\x-\t)-1)*(-3*(\x-\t)-1)*(-96*(\x-\t)-11))/7});
	}
        \draw[thick,red] (-1.618,0) -- (1.618,0);
        \draw[thick,red] (2.618,0) -- (4.236,0);
        \draw[thick,red] (-4.236,0) -- (-2.618,0);
        \draw[thick,blue] (1.618,0) -- (2.618,0);
        \draw[thick,blue] (-5,0) -- (-4.236,0);
        \draw[thick,blue] (-2.618,0) -- (-1.618,0);
        \draw[thick,blue,->] (4.236,0) -- (5,0);
        \node  at (-1.618-1.618-1,-1/5) {$-2\tau-1$};
        \node  at (-1.618-1,-1/5) {$-\tau-1$};
        \node  at (-1.618,-1/5) {$-\tau$};
        \node  at (0,-1/5) {$0$};
        \node  at (1.618,-1/5) {$\tau$};
        \node  at (1.618+1,-1/5) {$\tau+1$};
        \node  at (1.618+1+1.618,-1/5) {$2\tau+1$};
	\end{tikzpicture}
\caption{The graph of $V$}
\end{figure}

\begin{lemma}\label{lemma: s-equivariant}
	The potential $V$ is $S$-equivariant with range $1$.
\end{lemma}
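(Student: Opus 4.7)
The plan is to show that the local patch $P_x := (S - x) \cap B_1(0)$ uniquely determines $V(x)$; the conclusion $V(x) = V(y)$ will then follow immediately from $P_x = P_y$. I set $L := \alpha(x) - x \in (-\tau, 0]$ and $R := \beta(x) - x \in (0, \tau]$, so that the gap $R - L = \beta(x) - \alpha(x)$ always lies in $\{1, \tau\}$, and I recall that $V(x) = \zeta(-L)$ when $-L \leq R$ and $V(x) = \zeta(R)$ when $-L > R$, with $\zeta$ even.

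The main step is a case split according to whether $L > -1$ and whether $R < 1$. In the case $L > -1$ and $R < 1$, both $L$ and $R$ sit inside the open ball and are recovered as $L = \max(P_x \cap (-1, 0])$ and $R = \min(P_x \cap (0, 1))$, so $V(x)$ is fully determined by $P_x$. If $L > -1$ but $R \geq 1$, then $-L < 1 \leq R$ forces the $\alpha$-branch and $V(x) = \zeta(-L)$, determined by $L$ alone, which is still visible as $\max(P_x \cap (-1, 0])$. Symmetrically, if $L \leq -1$ and $R < 1$, then $-L \geq 1 > R$ forces the $\beta$-branch and $V(x) = \zeta(R)$, determined by $R$ alone, which is still visible as $\min(P_x \cap (0, 1))$. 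The remaining case $L \leq -1$ and $R \geq 1$ would force $R - L \geq 2 > \tau$, contradicting the gap constraint, so it cannot occur; in particular the patch $P_x$ is never empty.

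In each admissible case the value $V(x)$ is extracted from information visible in $P_x$, so $P_x = P_y$ yields $V(x) = V(y)$ as required. The subtle point I expect to check carefully is the middle two cases: the naive sufficient condition $\alpha(x) - x = \alpha(y) - y$ and $\beta(x) - x = \beta(y) - y$ stated just before the lemma can in fact fail within the ball $B_1(0)$ (for example when $L = 0$, the next gap $R \in \{1, \tau\}$ is invisible), but the gap constraint together with the forced inequalities $-L < 1 \leq R$ or $-L \geq 1 > R$ still pins down the correct branch of the piecewise definition and hence the value of $V$.
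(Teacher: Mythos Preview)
Your argument is correct. Both you and the paper proceed by a case split on whether $L=\alpha(x)-x$ and $R=\beta(x)-x$ lie inside $(-1,1)$, and both use the gap constraint $R-L\in\{1,\tau\}$ to rule out the case where neither is visible. The difference is in what each proof extracts from the visible data.

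The paper aims for the stronger conclusion that the full pair $(L,R)$ is determined by the patch, i.e.\ that $\alpha(x)-x=\alpha(y)-y$ \emph{and} $\beta(x)-x=\beta(y)-y$; it proves the $\alpha$-statement carefully and then writes ``Similarly, $\beta(x)-x=\beta(y)-y$.'' Your final paragraph is right to be suspicious here: the asymmetry $\alpha(x)\le x<\beta(x)$ breaks the ``similarly,'' and indeed for $x=0$, $y=\tau$ one has $(S-x)\cap B_1(0)=(S-y)\cap B_1(0)=\{0\}$ while $\beta(x)-x=\tau\neq 1=\beta(y)-y$. So the paper's stated sufficient condition genuinely fails in this boundary case, and the concluding line of its proof overshoots.

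Your route avoids this by observing that whenever $R\ge 1$ is invisible one necessarily has $-L<1\le R$, so the $\alpha$-branch of the piecewise definition of $V$ is forced and $V(x)=\zeta(-L)$ depends only on $L$, which \emph{is} visible (and symmetrically for the $\beta$-branch when $L\le -1$). This is exactly the patch needed to close the gap; your argument is thus slightly sharper than the paper's while following the same overall scheme.
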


\begin{proof}
	Fix $x,y\in\R$. if  \[(S-x) \cap B_{1}(0)=(S-y) \cap B_{1}(0),\] then \[S\cap B_1(x)-x=S\cap B_1(y)-y.\] If $\alpha(x)-x>-1$, then $\alpha(x)\in S\cap B_1(x)$. Then $\alpha(y)\in S\cap B_1(y)$ and $\alpha(x)-x=\alpha(y)-y$.   
	
	If $\alpha(x)-x\leq-1$, then $\beta(x)-\alpha(x)>x-\alpha(x)\geq1$. Since $\alpha(x), \beta(x)\in S$, we have $\beta(x)-\alpha(x)=\tau$. Similarly, we also have $\beta(y)-\alpha(y)=\tau$. Thus    $\beta(x)-x=\alpha(x)-x+\tau\leq\tau-1<1$. Then $\beta(x)-x=\beta(y)-y$. Hence $\alpha(x)-x=(\beta(x)-x)-(\beta(x)-\alpha(x))=(\beta(y)-y)-(\beta(y)-\alpha(y))=\alpha(y)-y$. 
	
	Hence we always have $\alpha(x)-x=\alpha(y)-y$. Similarly, $\beta(x)-x=\beta(y)-y$. Therefore, by the definition of the function $V$, we have $V(x)=V(y)$. 

\end{proof} 

\begin{lemma}
	The potential $V$ is not periodic.
\end{lemma}
 
\begin{proof}
	 The potential $V(x)$ reaches the maximum $160/27$ if and only if $x\in S$. If the potential $V$ had a positive period $T$, then we must have $S+T\subset S$, which is impossible since $S$ is non-periodic. 
\end{proof}
 
So far, we have constructed the  interaction function $H$ in (\ref{eq:interaction}) where the potential $V$ is given by (\ref{eq:potentialV}).

\section{The explicit one-dimensional example and its multiple solutions}\label{solutions}
Our goal in this section is to present the one-dimensional Fibonacci Frenkel-Kontorova model, i.e. the interaction function $H$ given in the above section, for which we can obtain minimal configurations, equilibrium configurations and multiple equilibria with or without any rotation number.

\subsection{Minimal configurations}\label{sec : minimal configurations}
In this section, we aim to find a minimal configuration with rotation number $(3\tau+1)/2$ where such minimal configurations exist according to \cite{GGP2006}. Here we will give a more concrete construction.  

\subsubsection{Local shapes of the Fibonacci quasi-crystal}
For any $x\in\R$, the translation  $S-x$ of $S$ is still a quasi-crystal. Let $S+\R=\{S-x\mid x\in\R\}$ be the collection of all translations of $S$.

	Let $P$ be a given patch of quasi-crystal $S$ and $U$ be a subset in $\R$. The cylinder set $\Omega_{P,U}$ is the set of all quasi-crystals in $S+\R$ that contain a copy of $P$ translated by an element of $U$, that is, $$\Omega_{P, U}:=\{\T\in S+\R \mid P-u \text { is a }  \text {patch in $\T$ for some } u \in U\}.$$
	In particular, we denote $\Omega_P:=\Omega_{P,\{0\}}$. It is easy to check that $\Omega_{P, U}=\bigcup_{u\in U}(\Omega_P-u)$.

For any integer $l\geq 1$,  let
\begin{align*}
c_{l}&=S\cap[-\tau^{2l},\tau^{2l}],\\
\varepsilon_{l,1}&=S\cap[-\tau^{2l},\tau^{2l+2}],\\
\varepsilon_{l,2}&=S\cap[\tau^{2l-1},2\tau^{2l+2}]-\tau^{2l+1}.
\end{align*}

We also use the corresponding symbolic words to represent these patches, for example:

	\begin{align*}
	c_{1}&=ba|ab,\\
	\varepsilon_{1,1}&=ba|abaab,\\
	\varepsilon_{1,2}&=ba|ababaab;
	\end{align*}
	\begin{align*}
	c_{2}&=ababa|abaab,\\
	\varepsilon_{2,1}&=ababa|abaababaabaab,\\
	\varepsilon_{2,2}&=ababa|abaababaababaabaab.
	\end{align*}
	Let $C_l:=\Omega_{c_l}$, $\mathcal{E}_{l,1}:=\Omega_{\varepsilon_{l,1}}$ and $\mathcal{E}_{l,2}:=\Omega_{\varepsilon_{l,2}}$. Obviously, $\mathcal{E}_{l,1}$ and $\mathcal{E}_{l,2}$ are subsets of $C_l$.
	\begin{remark}
		The definitions of patches $c_l,\varepsilon_{l,1}$ and  $\varepsilon_{l,2}$ above are motivated as follows. 
		Consider the function $$\mathcal{L}^l: C_l\rightarrow \R\quad\T\mapsto\inf\{t>0\mid \T-t\in	C_l\}.$$ 
		The patches translated from $c_l$ are distributed over the real line and we use  the function $\mathcal{L}^l$ to measure the distance between every pair of adjacent patches. In the following Lemma~\ref{lemma : partition of hull}, we will show that the distance can only take two values $\tau^{2l+1}$ and $\tau^{2l+2}$, i.e., the range of $\mathcal{L}^l$ is $\{\tau^{2l+1},\tau^{2l+2}\}$. Then, it is easy to see that $\varepsilon_{l,1}$ and $\varepsilon_{l,2}$ are two different  patches which both start and  end with $c_l$ and do not have any other $c_l$ in between.
	\end{remark}
	
	We define the following point set of $\R$:
		\[S^l:=\{x\in\R\mid P-x=c_l \text{ for some patches $P$ in $S$}\}.\] For any $x\in\R$, let $\alpha_l(x)$ be the largest number in $S^l$ such that $\alpha_l(x)\leq x$ and $\beta_l(x)$ be the least number in $S^l$ such that $\beta_l(x)>x$.
	
	\begin{lemma}\label{lemma : partition of hull}
		For each $l\geq1$,
		\begin{compactenum}[(i)]
			\item $C_l=\mathcal{E}_{l,1}\sqcup\mathcal{E}_{l,2}$,
			\item $S+\R=\Omega_{\varepsilon_{l,1},[0,\tau^{2l+1})}\sqcup\Omega_{\varepsilon_{l,2},[0,\tau^{2l+2})}$.
		\end{compactenum}	
	\end{lemma}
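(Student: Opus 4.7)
My plan is to exploit the substitutive self-similarity of $S$. Since the Fibonacci substitution $\rho$ is recognizable, every translate $\T\in S+\R$ admits a unique decomposition into level-$2l$ super-tiles of type $\rho^{2l}(a)=u^{(2l+1)}$ (of length $\tau^{2l+1}$) or $\rho^{2l}(b)=u^{(2l)}$ (of length $\tau^{2l}$), whose sequence of types follows the fixed point $u$. The key claim I would establish is that the patch $c_l$ occurs at a position $p$ of $\T$ if and only if $p$ is a boundary of this level-$2l$ decomposition whose immediate left super-tile is of type $a$.

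To verify this claim, I would first observe that the right half of $c_l$ equals $u^{(2l)}$, which is simultaneously the common prefix of length $f_{2l}$ of $\rho^{2l}(a)=u^{(2l)}u^{(2l-1)}$ and the whole of $\rho^{2l}(b)=u^{(2l)}$, so the right-hand constraint is automatic at any level-$2l$ boundary. The left half of $c_l$, by contrast, coincides with the suffix of length $f_{2l}$ of $\rho^{2l}(a)$: the case $l=1$ is immediate since the suffix $ba$ of $\rho^2(a)=aba$ matches $c_1=ba|ab$, and the pattern propagates to higher $l$ by induction via $\rho^{2l+2}(a)=\rho^2(\rho^{2l}(a))$ together with the palindrome-up-to-two-letters property of $u^{(i)}$ already exploited in the construction of $w$. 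Since this suffix differs from $u^{(2l)}=\rho^{2l}(b)$, only boundaries preceded by an $a$-super-tile satisfy the left-hand constraint.

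Granted the claim, part (i) follows by examining the first super-tile to the right of the origin in a given $\T\in C_l$. If it is of type $a$, the next qualifying boundary lies at distance $\tau^{2l+1}$, so $\T\in\mathcal{E}_{l,1}$; otherwise it is of type $b$, which must be followed by an $a$-super-tile since $bb$ never appears in $u$ (as noted in the proof of Lemma~\ref{lemma: quasicrystal}), placing the next qualifying boundary at distance $\tau^{2l}+\tau^{2l+1}=\tau^{2l+2}$ and giving $\T\in\mathcal{E}_{l,2}$; the two alternatives are mutually exclusive. For part (ii), the consecutive $c_l$-positions partition $\R$ into intervals of length $\tau^{2l+1}$ or $\tau^{2l+2}$ whose contents realize $\varepsilon_{l,1}$ or $\varepsilon_{l,2}$ respectively, so the origin of any $\T\in S+\R$ lies in a unique such interval at a unique offset in the corresponding half-open range, yielding both covering and disjointness.

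The main obstacle is the key claim matching the left half of $c_l$ with the suffix of $\rho^{2l}(a)$: recognizability of Fibonacci is classical, but the precise identification of this suffix requires careful bookkeeping through the induction. A more elementary alternative avoids invoking recognizability altogether by inducting directly on $l$, exploiting the fact that the $c_l$-positions themselves form a rescaled Fibonacci chain to which the $l=1$ analysis — a short case-check of consecutive factors of $u$ using the forbidden pattern $bb$ — applies verbatim at the next level.
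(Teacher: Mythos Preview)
Your alternative at the end---reduce to $l=1$ via the self-similarity $c_{l+1}=\rho^2(c_l)$ and then case-check consecutive factors of $u$ using forbidden words---is essentially the paper's own argument. The paper shows that the first-return function $\mathcal{L}^1$ on $C_1$ takes only the values $\tau^3$ and $\tau^4$: the value $\tau^3$ is witnessed by $\varepsilon_{1,1}=c_1aab=ba|ac_1$, and longer gaps are excluded because an intervening block $P$ would have to look like $ababa\cdots aba$ (no $bb$, and no $aa$ since that would create an intermediate copy of $c_1=baab$), whence $c_1Pc_1$ contains $ababab$, which is forbidden as $aaa$ is. Then $C_1=(\mathcal{L}^1)^{-1}(\tau^3)\sqcup(\mathcal{L}^1)^{-1}(\tau^4)=\mathcal{E}_{1,1}\sqcup\mathcal{E}_{1,2}$, and~(i) for general $l$ follows from $\varepsilon_{l+1,i}=\rho^2(\varepsilon_{l,i})$; part~(ii) is then deduced exactly as you sketch.

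Your primary route through the level-$2l$ super-tile decomposition is a genuinely different, more structural argument: it replaces the ad~hoc forbidden-factor analysis by a single identification of the $c_l$-positions with the $a$-preceded super-tile boundaries, after which both~(i) and~(ii) drop out immediately from the fact that the super-tile types themselves follow the Fibonacci pattern. The cost is that it leans on recognizability, whereas the paper's argument is entirely elementary and self-contained.

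There is, however, a real gap in your key claim as stated. You verify the ``if'' direction (every $a$-preceded boundary carries a copy of $c_l$) and rule out $b$-preceded boundaries, but nothing in your argument excludes occurrences of $c_l$ at \emph{non-boundary} positions. Saying that the right half $u^{(2l)}$ is the common prefix of both super-tile types only shows the right-hand constraint holds at every boundary, not that it fails elsewhere. The missing ingredient is that $u^{(2l)}$ occurs in $w$ \emph{only} at level-$2l$ super-tile boundaries. This is true---for $l=1$ one checks directly that $ab$ never begins at an interior position of a super-tile $aba$ or $ab$ (the interior two-letter windows are $ba$ and $aa$), and the general case follows by applying $\rho^2$---but it is precisely where recognizability does the work and must be made explicit.
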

	
	\begin{proof}
		(1). Firstly, we show that the range of $\mathcal{L}^l$ is $\{\tau^{2l+1},\tau^{2l+2}\}$. The Fibonacci quasi-crystal $S$ is the corresponding geometric word of the symbolic word $\omega$ in section~\ref{sec:fibonaccichain}. Then the substitution rule $\rho^2$ defined on any subword of $\omega$ can be also defined on any patches of $S$ by the connection between geometric words and symbolic words. Since $\rho^2(w)=w$, the image of a patch of $S$ under $\rho^2$ is still a patch of $S$. In particular, we have $$\rho^2\left(S\bigcap[-\tau^{2l}, \tau^{2l}]\right)=S\bigcap[-\tau^{2l}\times \tau^2, \tau^{2l}\times \tau^2]=S\bigcap[-\tau^{2(l+1)}, \tau^{2(l+1)}],$$ that is, $c_{l+1}=\rho^2(c_l)$. Hence $\mathcal{L}^{l+1}(\T)=\tau^{2}\mathcal{L}^{l}(\T)$ for all $\T\in C_{l+1}$. We only need to show the claim when $l=1$. Notice that $\varepsilon_{1,1}=ba|abaab=c_1aab=ba|ac_1$, we have $\mathcal{L}^{1}(\mathcal{E}_{1,1})=\{\tau^3\}$. It is also easy to see that $\mathcal{L}^{1}(\mathcal{E}_{1,2})=\{\tau^4\}$. We claim that  \begin{align*}
      &\left[(0,\tau^3)\bigcup (\tau^3,\tau^4)\bigcup (\tau^4,+\infty)\right]\\ &\quad \quad\quad \quad\quad \quad\quad \quad\bigcap \left\{t>0\mid \T\in C_l,  \T-t\in	C_l\text{ and }\T-s\not\in C_l \,\forall s\in(0,t)\right\}=\emptyset.
    \end{align*} In fact, for any $t\in(0, \tau^3) $ and any $\mathcal{T}\in C_l$, if $\mathcal{T}-t\in C_l$, then the  corresponding symbolic word of $\mathcal{T}-t$ must be $\cdots ba|ab\cdots$. Then the corresponding symbolic word of $\mathcal{T}$ could only be $\cdots baab|\cdots$ or $\cdots baa|b\cdots$, which is impossible since $\mathcal{T}\in C_l$. For $t\in(\tau^3,\tau^4)$, in the same way,  the corresponding symbolic word of $\T$ could only be $\cdots baab\cdot\cdot|\cdots$ or $\cdots baab\cdots|\cdots$. Furthermore, the word $\cdots baab\cdots|\cdots$ must be $\cdots baabaaa|\cdots$, which is impossible since  $aaa$ is not a subword of $w$. Hence the corresponding symbolic word of $\T$ could only be $\cdots baab\cdot\cdot|\cdots$. Since $\T\in C_l$, its corresponding symbolic word is $\cdots baabba|ab\cdots$, which is impossible since  $bb$ is not a subword of $w$.  If $t\in (\tau^4, +\infty)$, the corresponding symbolic word of $\T$ could only be $\cdots baabPba|ab\cdots$.  the subword $P$ formed by more than one letters must be like $ababa\cdots aba$ since $bb$ and $aa$ are not subwords of $P$. In fact, if $aa$ is a subword of $P$, then $baab$ is a subword of $\cdots P\cdots$, which contradicts that $\T-s\not\in C_l \,\forall s\in(0,t)$. Then the word $\cdots baabPba|ab\cdots$ has subword $ababab$ which is impossible since $aaa$ is not a subword of $\omega$ and  $\rho(aaa)=ababab$. Hence the range of $\mathcal{L}^1$ is $\{\tau^3,\tau^4\}$. It is obvious that $$C_1=(\mathcal{L}^1)^{-1}(\tau^3)\sqcup(\mathcal{L}^1)^{-1}(\tau^4)=\mathcal{E}_{1,1}\sqcup\mathcal{E}_{1,2}.$$ Since $\varepsilon_{l+1,i}=\rho^2(\varepsilon_{l,i})$, $i\in\{1,2\}$, by induction, we get (i).%

		(2). Recall the definition of $\alpha_l$ and $\beta_l$. We have proved that $\beta_l(x)-\alpha_l(x)$ is $\tau^{2l+1} \text{ or } \tau^{2l+2}$. If $\beta_l(x)-\alpha_l(x)=\tau^{2l+1} $, then
		\begin{equation*}\label{eq: superword-1}
		(S-\alpha_l(x))\cap[-\tau^{2l},\tau^{2l+2}]=\varepsilon_{l,1}.
		\end{equation*} If $\beta_l(x)-\alpha_l(x)=\tau^{2l+2} $, then
		\begin{equation*}\label{eq: superword-2}
		(S-\alpha_l(x))\cap[-\tau^{2l},\tau^{2l+2}+\tau^{2l}]=\varepsilon_{l,2}.
		\end{equation*}

		For any $\T=S-x\in S+\R$, we have $\T-(x-\alpha_l(x))=S-\alpha_l(x)$ and thus $$S+\R\subset\Omega_{\varepsilon_{l,1},[0,\tau^{2l+1})}\cup\Omega_{\varepsilon_{l,2},[0,\tau^{2l+2})}.$$ 
		The proof of the opposite inclusion is obvious and so we complete the proof of (ii).
	\end{proof}

\subsubsection{The frequencies of the local shapes}
If $\beta_l(x)-\alpha_l(x)=\tau^{2l+1}$, then we denote the corresponding symbolic word of  $S\cap[\alpha_l(x),\beta_l(x)]$ by $A_l$. If $\beta_l(x)-\alpha_l(x)=\tau^{2l+2}$, then we denote the corresponding symbolic word of  $S\cap[\alpha_l(x),\beta_l(x)]$ by $B_l$. For example, \[A_1=aba,\quad B_1=ababa,\quad A_2=abaababa,\quad B_2=abaababaababa.\]
Notice that $A_2=A_1B_1$ and $B_2=A_1B_1B_1$. Since $\varepsilon_{l+1,i}=\rho^2(\varepsilon_{l,i})$ for $i\in\{1,2\}$,  we have $A_{l+1}=\rho^2(A_l)$ and $B_{l+1}=\rho^2(B_l)$. Hence we have $A_{l+1}=A_l B_l$ and $B_{l+1}=A_l B_l B_l$ for all $l\geq 1$ by induction. Let  \[M:=\begin{pmatrix}
1 & 1\\
1 & 2\\
\end{pmatrix}:=\begin{pmatrix}
\text{the number of $A_l$ in } A_{l+1} & \text{the number of $B_l$ in } A_{l+1}\\
\text{the number of $A_l$ in } B_{l+1} & \text{the number of $B_l$ in } B_{l+1}\\
\end{pmatrix}\] then 
\begin{align*}
  M^n&=\frac{1}{\sqrt{5}}\begin{pmatrix}
    \tau^{-2n+1}+\tau^{2n-1} & -\tau^{-2n}+\tau^{2n}\\
    -\tau^{-2n}+\tau^{2n} & \tau^{-2n-1}+\tau^{2n+1}
    \end{pmatrix}\\ 
    &=\begin{pmatrix}
      \text{the number of $A_l$ in } A_{l+n} & \text{the number of $B_l$ in } A_{l+n}\\
      \text{the number of $A_l$ in } B_{l+n} & \text{the number of $B_l$ in } B_{l+n}\\
      \end{pmatrix}.
\end{align*}
Consider the two limits \[\lim_{n\to\infty}\frac{\text{the number of $A_l$ in } A_{l+n}}{| A_{l+n}|}=\lim_{n\to\infty}\frac{\text{the number of $A_l$ in } B_{l+n}}{|B_{l+n}|}=\frac{1}{\sqrt{5}\tau^{2l+2}},\]
where $|\cdot|$ is the assignment function defined in section \ref{sec:fibonaccichain}.
Since the symbolic word of  $S\cap[-\tau^4,\tau^3]$ is $B_1|A_1$, the absolute frequency of $A_l$ at $0$ is the limit $1/(\sqrt{5}\tau^{2l+2})$ above. In the same way, the absolute frequency of $B_l$ at $0$ is $1/(\sqrt{5}\tau^{2l+1})$.

\subsubsection{The construction of a minimal configuration}\label{constru of minimal}
For any integer $l\geq 1$, let $\B_l$ be an oriented  1-dimensional branched manifold in $\R^2$, which consists of two circles $\gamma_{l,1}$ and $\gamma_{l,2}$ that are tangent to one another  at the tangent point $R_l$. In fact, this branched manifold is the Anderson-Putnam complex \cite{Sadun2008}. The circumferences of $\gamma_{l,1}$ and $\gamma_{l,2}$ are $\tau^{2l+1}$ and $\tau^{2l+2}$, respectively. Given two points $\xi$ and $\eta$ on the same circle, the oriented length of the arc from $\xi$ to $\eta$ is denoted by $d(\xi,\eta)$. Let $m_{l,i}(x)$ denote the point on $\gamma_{l,i}$ such that $d(R_l,m_{l,i}(x))=x$, $i\in\{1,2\}$.  
\begin{figure}[htp]
    \centering
\def\svgwidth{0.25\columnwidth}
\import{./figures/}{branchedManifold.pdf_tex}

    \caption{ The branched manifold $\mathcal{B}_l$}
    \label{fig:branchedManifold}
\end{figure}

For any $l\geq1$, we define the map $\kappa_l:\B_{l+1}\rightarrow\B_l$ which is illustrated by

\begin{figure}[htp]
    \centering
\def\svgwidth{0.45\columnwidth}
\begingroup%
  \makeatletter%
  \providecommand\color[2][]{%
    \errmessage{(Inkscape) Color is used for the text in Inkscape, but the package 'color.sty' is not loaded}%
    \renewcommand\color[2][]{}%
  }%
  \providecommand\transparent[1]{%
    \errmessage{(Inkscape) Transparency is used (non-zero) for the text in Inkscape, but the package 'transparent.sty' is not loaded}%
    \renewcommand\transparent[1]{}%
  }%
  \providecommand\rotatebox[2]{#2}%
  \newcommand*\fsize{\dimexpr\f@size pt\relax}%
  \newcommand*\lineheight[1]{\fontsize{\fsize}{#1\fsize}\selectfont}%
  \ifx\svgwidth\undefined%
    \setlength{\unitlength}{1190.5511811bp}%
    \ifx\svgscale\undefined%
      \relax%
    \else%
      \setlength{\unitlength}{\unitlength * \real{\svgscale}}%
    \fi%
  \else%
    \setlength{\unitlength}{\svgwidth}%
  \fi%
  \global\let\svgwidth\undefined%
  \global\let\svgscale\undefined%
  \makeatother%
  \begin{picture}(1,0.70714286)%
    \lineheight{1}%
    \setlength\tabcolsep{0pt}%
    \put(0,0){\includegraphics[width=\unitlength,page=1]{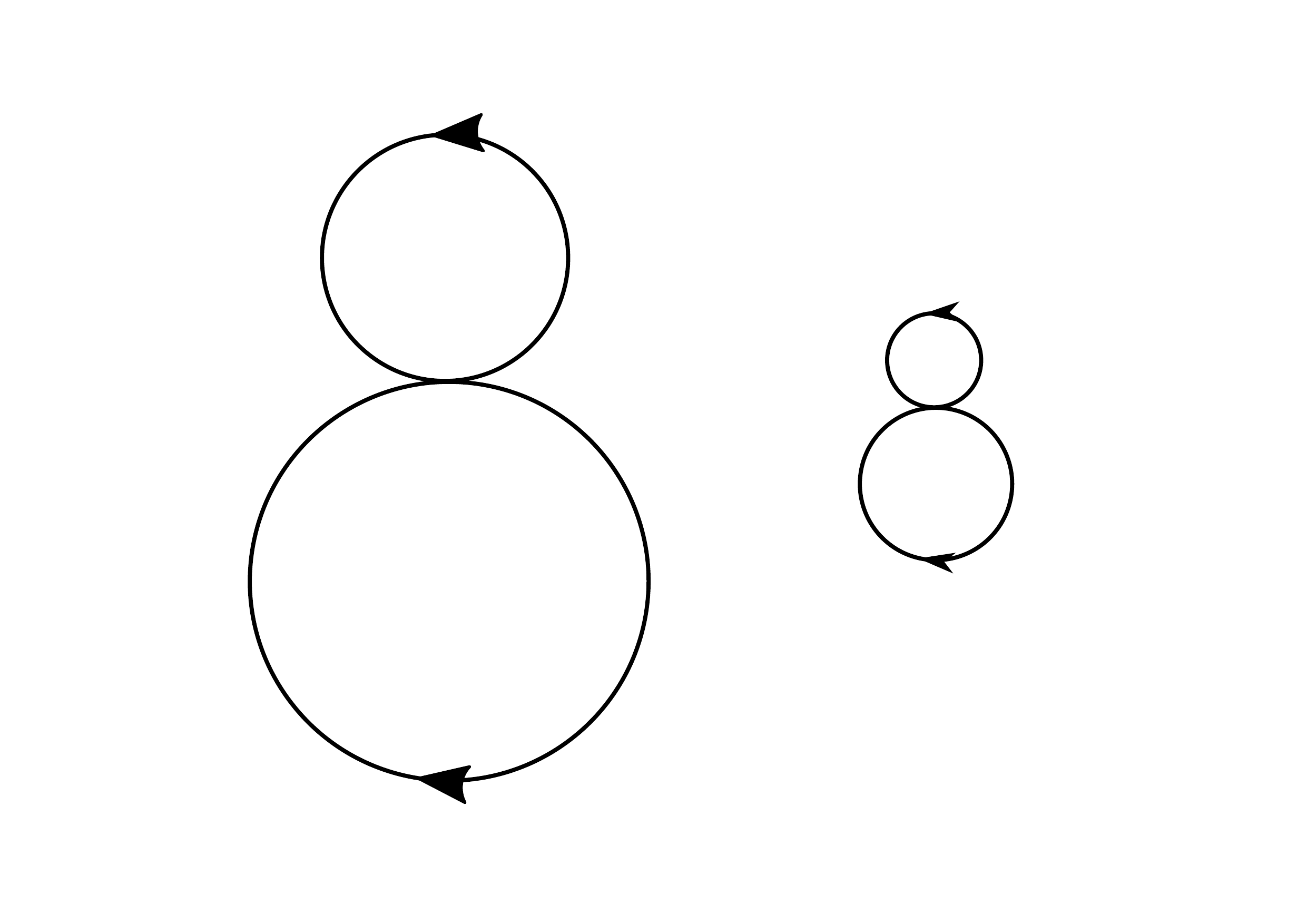}}%
    \put(0.33117908,0.04838438){\color[rgb]{0,0,0}\makebox(0,0)[lt]{\lineheight{1.25}\smash{\begin{tabular}[t]{l}$\mathcal{B}_{l+1}$\end{tabular}}}}%
    \put(0.69655609,0.22117346){\color[rgb]{0,0,0}\makebox(0,0)[lt]{\lineheight{1.25}\smash{\begin{tabular}[t]{l}$\mathcal{B}_{l}$\end{tabular}}}}%
    \put(0,0){\includegraphics[width=\unitlength,page=2]{kappaBlack.pdf}}%
    \put(0.56156464,0.42276079){\color[rgb]{0,0,0}\makebox(0,0)[lt]{\lineheight{1.25}\smash{\begin{tabular}[t]{l}$\kappa_l$\end{tabular}}}}%
  \end{picture}%
\endgroup%

\def\svgwidth{0.45\columnwidth}
\begingroup%
  \makeatletter%
  \providecommand\color[2][]{%
    \errmessage{(Inkscape) Color is used for the text in Inkscape, but the package 'color.sty' is not loaded}%
    \renewcommand\color[2][]{}%
  }%
  \providecommand\transparent[1]{%
    \errmessage{(Inkscape) Transparency is used (non-zero) for the text in Inkscape, but the package 'transparent.sty' is not loaded}%
    \renewcommand\transparent[1]{}%
  }%
  \providecommand\rotatebox[2]{#2}%
  \newcommand*\fsize{\dimexpr\f@size pt\relax}%
  \newcommand*\lineheight[1]{\fontsize{\fsize}{#1\fsize}\selectfont}%
  \ifx\svgwidth\undefined%
    \setlength{\unitlength}{1190.5511811bp}%
    \ifx\svgscale\undefined%
      \relax%
    \else%
      \setlength{\unitlength}{\unitlength * \real{\svgscale}}%
    \fi%
  \else%
    \setlength{\unitlength}{\svgwidth}%
  \fi%
  \global\let\svgwidth\undefined%
  \global\let\svgscale\undefined%
  \makeatother%
  \begin{picture}(1,0.70714286)%
    \lineheight{1}%
    \setlength\tabcolsep{0pt}%
    \put(0,0){\includegraphics[width=\unitlength,page=1]{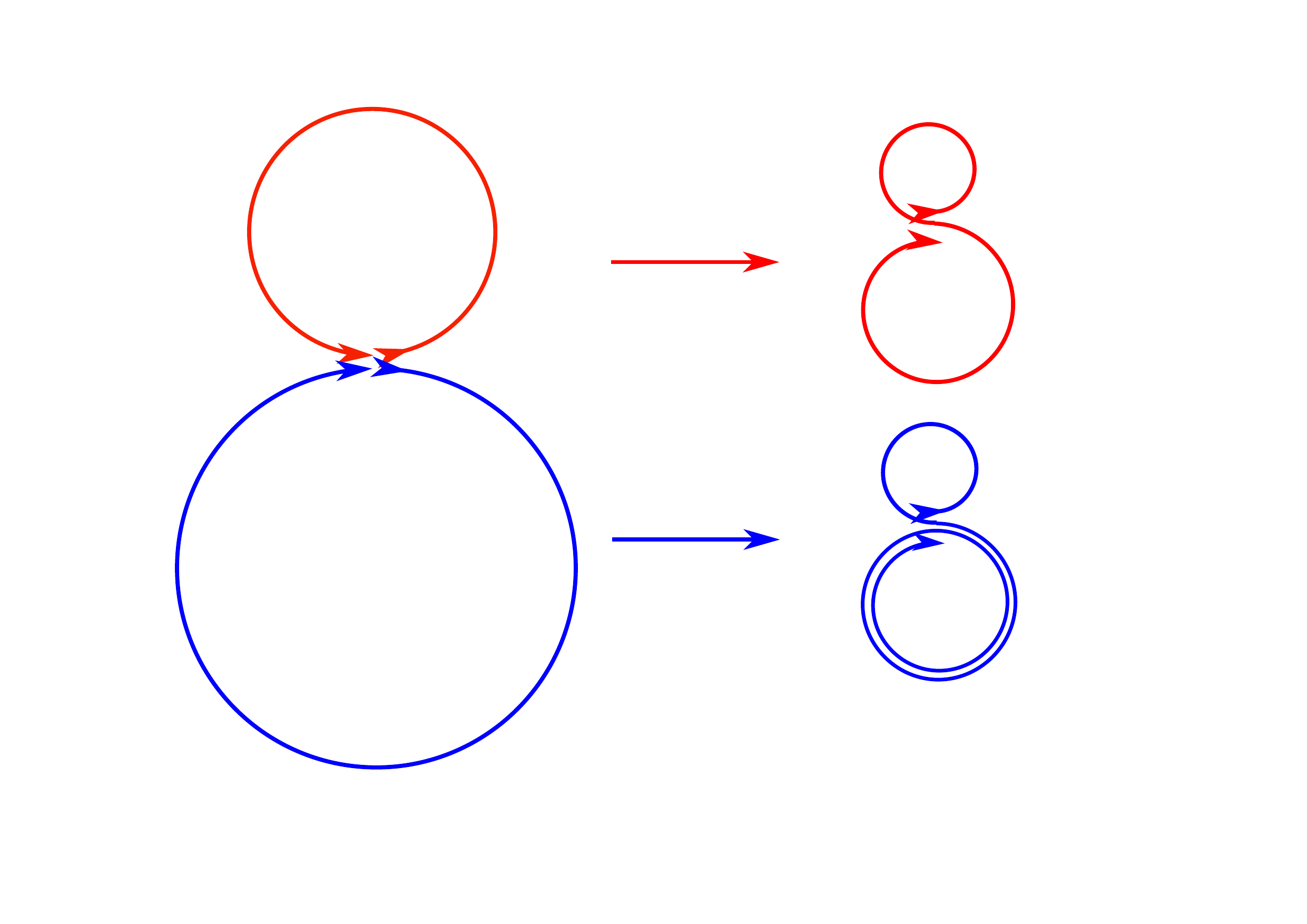}}%
    \put(0.47360927,0.54702584){\color[rgb]{0,0,0}\makebox(0,0)[lt]{\lineheight{1.25}\smash{\begin{tabular}[t]{l}{\color{red}$\kappa_l|_{\gamma_{l,1}}$}\end{tabular}}}}%
    \put(0.47698015,0.33803101){\color[rgb]{0,0,0}\makebox(0,0)[lt]{\lineheight{1.25}\smash{\begin{tabular}[t]{l}{\color{blue}$\kappa_l|_{\gamma_{l,2}}$}\end{tabular}}}}%
  \end{picture}%
\endgroup%

    \caption{The map $\kappa_l$}
    \label{fig:kappaBlack}
\end{figure}

and the projection $\pi_l: \R\rightarrow \B_l$ is defined by \[\pi_l(x)=\begin{cases}
	m_{l,1}(x-\alpha_l(x)), &\text{ if } \beta_l(x)-\alpha_l(x)=\tau^{2l+1};\\
	m_{l,2}(x-\alpha_l(x)), &\text{ if } \beta_l(x)-\alpha_l(x)=\tau^{2l+2}.
	\end{cases}\]

On the one hand, since $A_{l+1}=A_l B_l$ and $B_{l+1}=A_l B_l B_l$ for all $l\geq 1$, we have $\kappa_l\circ\pi_{l+1}=\pi_l$. 
On the other hand, since the projection $\pi_l$ is a \emph{ covering map} from $\R$ to $\B_{l}$ and $\pi_l(S^l)=\{R_l\}$, the preimage $(\pi_l)^{-1}(y)$ of each point $y$ is a point set in $\R$, and $S\cap \overline{B_1(x)}$ is the same patch up to translations for any point $x$ in $(\pi_l)^{-1}(y)$. Therefore, the value of $V$ is the same on $(\pi_l)^{-1}(y)$ by Lemma \ref{lemma: s-equivariant}. Such property of $V$ allows us to define the potential on $\B_l$ by $\widehat{V}:=V\circ(\pi_l)^{-1}:\B_l\rightarrow\R$  and the function $\widehat{H}:\gamma_{l,1}\times\gamma_{l,1}\cup\gamma_{l,2}\times\gamma_{l,2}\rightarrow\R$ which maps $(\xi,\eta)$ to $\frac{1}{2}d(\xi,\eta)^2+\widehat{V}(\xi).$ 

Now we construct the minimal configuration.
	
\emph{Step 1}: Fix $l=1$. For each $i\in\{1,2\}$, let $b_{1,i}$ be a point of $\gamma_{1,i}\backslash \{R_1\}$, and as in section~\ref{VP}  we extend $\widehat{H}$ acting on the triple segment $(R_1, b_{1,i}, R_1)$ and obtain:
	\begin{align*}
 \widehat{H}(R_1, b_{1,i}, R_1)&=\widehat{H}(R_1,b_{1,i})+\widehat{H}(b_{1,i},R_1)\\ 
 &=\frac{1}{2}d(R_1,b_{1,i})^2+\widehat{V}(R_1)+\frac{1}{2}d(b_{1,i},R_1)^2+\widehat{V}(b_{1,i})\\
 &=\frac{1}{2}d(R_1,b_{1,i})^2+\widehat{V}(R_1)+\frac{1}{2}(\tau^{2+i}-d(R_1,b_{1,i}))^2+\widehat{V}(b_{1,i})\\
 &=d(R_1,b_{1,i})^2-\tau^{2+i}d(R_1,b_{1,i})+V(d(R_1,b_{1,i}))+\tau^{4+2i}/2+V(0). 
 \end{align*}

One can easily see that $\widehat{H}(R_1, b_{1,i}, R_1)$ reaches its minimum at $b_{1,i}^*:=m_{1,i}(\tau^{2+i})$ which is the antipodal point of $R_1$ on $\gamma_{1,i}$ since $d(R_1,b_{1,i}^*)=\tau^{2+i}/2$ and the nonnegative potential $V$ vanishes at $\tau^{2+i}/2$.
We can then construct a bi-infinite increasing sequence $(\theta_{1,n})_{n\in\Z}$ of $\R$ with $\theta_{1,0}=0$ such that
\[
(\theta_{1,n})_{n\in\Z} = (\pi_1)^{-1}(\{R_1,b_{1,1}^*, b_{1,2}^*\}).
\]
This is because the pre-image of $\{R_1,b_{1,1}^*, b_{1,2}^*\}$ under $\pi_1$ is a discrete accountable subset of $\R$.
We thus obtain a configuration  $(\theta_{1,n})_{n\in\Z}$ on $\R$ which is minimal  on each segment $[\alpha_1(x),\beta_1(x)]$.

\emph{Step 2:}
For any $l\geq 1$ and each $i\in\{1,2\}$, we first define the number $N_{l,i}$ by iteration  
\[
\begin{pmatrix}
	N_{l+1,1}\\
	N_{l+1,2}
	\end{pmatrix}=\begin{pmatrix}
	1&1\\
	1&2
	\end{pmatrix} \begin{pmatrix}
	N_{l,1}\\
	N_{l,2}
	\end{pmatrix},
	\qquad \text{with $N_{1,1}=N_{1,2}:=2$. }
	\] 
Note that $(N_{l,1}, N_{l,2})=(2f_{2l-2},2f_{2l-1})$, where $\{f_i\}$ is the Fibonacci number defined in section \ref{sec:fibonaccichain}.

We now extend $\widehat{H}$ acting on the segment $(R_l, b_{l,i,1}, \dots, b_{l,i,N_{l,i}-1}, R_l)$
where  $b_{l,i,1},$ $ b_{l,i,2},\dots, b_{l,i,N_{l,i}-1}$ are $N_{l,i}-1$ different points in $\gamma_{l,i}\backslash\{R_l\}$, and we require that the subscript $j$ of the point $b_{l,i,j}$ increases along the orientation of the circle $\gamma_{l,i}$. 
Then $\widehat{H}(R_l, b_{l,i,1}, \dots, b_{l,i,N_{l,i}-1}, R_l)$ reaches its minimum at $(b_{l,i,1}, \dots, b_{l,i,N_{l,i}-1} )= (b_{l,i,1}^*, \dots, b_{l,i,N_{l,i}-1}^*)$.

Similarly as before, since pre-image $(\pi_l)^{-1}(\{R_l,b_{l,i,j}^*\mid 1\leq j\leq N_{l,i}-1\})$ is a discrete accountable subset of $\R$, we can obtain a bi-infinite increasing sequence $(\theta_{l,n})_{n\in\Z}$  on $\R$ with $\theta_{l,0}=0$ which is  minimal on each segment $[\alpha_l(x),\beta_l(x)]$.

\emph{Step 3}: By the two steps above we get a sequence of configurations  $(\theta^m)_{m\in \mathbb{Z}^+} = ((\theta_{m,n})_{n\in\Z})_{m\in\mathbb{Z}^+}$ (see Figure~5 for numerical simulations of equilibrium configurations with $m=5$). In the subsequent two sections, we will show that this sequence is in a compact subset of $\R^\Z$ and its accumulation point with respect to $m\rightarrow + \infty$ is a minimal configuration with rotation number $(3\tau+1)/2$.
	\begin{figure}  \label{Fig:minimal configurations}
		\centering  \includegraphics[scale=0.3]{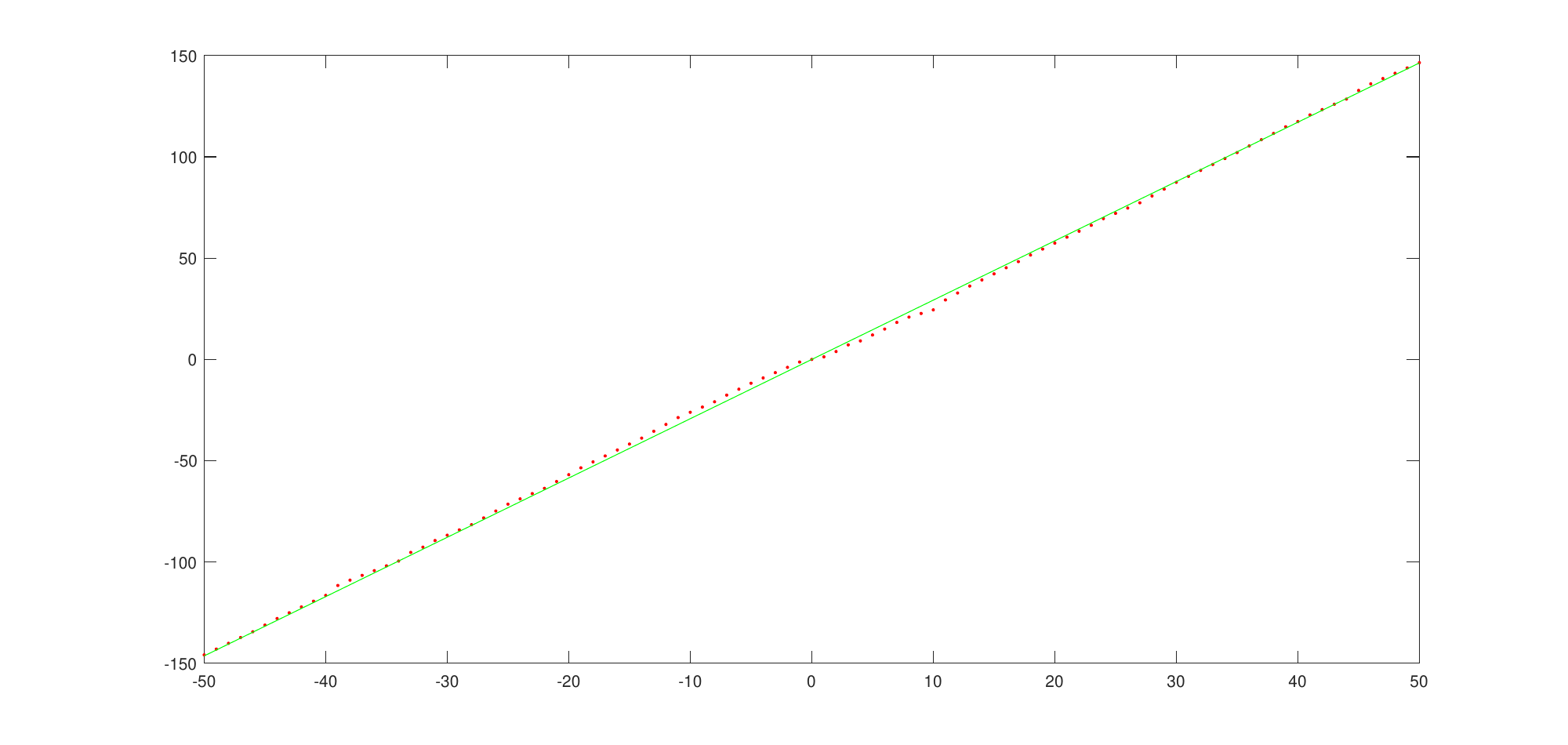}  
		\caption{The red points are the graph of $(\theta_{5,n})_{n\in[-50,50]}$ and the green line is $y=(3\tau+1)/2x$.}   
	\end{figure}

\subsubsection{ Combinatorics of minimal configurations}	

		\begin{proposition}[\cite{BKR2006}]\label{prop: the  combinatorics}
		Let $(\theta_1, \dots,\theta_n)$ be a minimal segment and let $I$ be an interval in $[\theta_1,\theta_n]$, then there exists an integer $n\in \Z^{+}\cup\{0\}$ such that for any pair of disjoint intervals $I_1=I+u_1$ and $I_2=I+u_2$ in $[\theta_1,\theta_n]$ which satisfy that for each $\theta$ in $I$ and $k = 1, 2$:
  \[ S\cap B_{1}(\theta)+u_{k}= S\cap B_{1}\left(\theta+u_{k}\right)\]
  the cardinality $\operatorname{Card}(I_k\cap (\theta_1, \dots,\theta_n))\in \{n,n+1,n+2\} $ for $k=1,2$.
	\end{proposition}


 \begin{corollary}\label{coro: the  combinatorics}
 For each $l\geq 1$ and $i=1,2$, let $u\in \R^\Z$ be a minimal configuration(resp. let $(u_p, \dots, u_q)$ be a minimal segment), then for any two connected component $I_1, I_2$ of $\pi_{l}^{-1}(\gamma_{l,i})$(resp. any two connected component $I_1, I_2$ of $\pi_{l}^{-1}(\gamma_{l,i})$ which does not intersect $(-\infty, u_p]\cup[u_q, +\infty)$), we have \[\left|\operatorname{Card}(u\bigcap I_1)-\operatorname{Card}(u\bigcap I_2)\right|\leq 2\](resp.\[\left|\operatorname{Card}((u_p, \dots, u_q)\bigcap I_1)-\operatorname{Card}((u_p, \dots, u_q)\bigcap I_2)\right|\leq 2).\] 
\end{corollary}

\subsubsection{The proof of item (i) of Theorem~\ref{maintheorem}}
	\begin{lemma}
		For each $l\geq 1$, $(\theta_{l,n})_{n\in\Z}$ has rotation number $(3\tau+1)/2$.
	\end{lemma}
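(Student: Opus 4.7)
The plan is to exploit the branched-manifold projection $\pi_l:\R\to\B_l$ from the construction: as the configuration $(\theta_{l,n})_{n\in\Z}$ progresses along the real line, its image under $\pi_l$ repeatedly loops around either $\gamma_{l,1}$ (in correspondence with an occurrence of the patch $A_l$ of length $\tau^{2l+1}$, carrying $N_{l,1}=2f_{2l-2}$ atoms per loop) or $\gamma_{l,2}$ (in correspondence with $B_l$ of length $\tau^{2l+2}$, carrying $N_{l,2}=2f_{2l-1}$ atoms per loop). The rotation number should then equal the mean length-per-atom weighted by the frequencies of $A_l$ and $B_l$ in $S$.

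Concretely, for each $N\geq 1$ let $p(N)$ and $q(N)$ denote the numbers of complete loops around $\gamma_{l,1}$ and $\gamma_{l,2}$ respectively performed as the index runs over $\{0,\dots,N\}$. Then
\[
\theta_{l,N}=p(N)\tau^{2l+1}+q(N)\tau^{2l+2}+O(1),\qquad N=p(N)N_{l,1}+q(N)N_{l,2}+O(1),
\]
with $O(1)$ boundary terms bounded by $\tau^{2l+2}$ and $N_{l,2}$ respectively, independently of $N$. The task reduces to showing that $p(N)/q(N)$ converges and identifying its limit. By construction this ratio coincides with the ratio of occurrences of $A_l$ and $B_l$ in the segment $S\cap[0,\theta_{l,N}]$. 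Using the uniform absolute frequencies $\freq(A_l)=1/(\sqrt{5}\tau^{2l+2})$ and $\freq(B_l)=1/(\sqrt{5}\tau^{2l+1})$ computed in Section~\ref{constru of minimal}, one obtains $p(N)/q(N)\to 1/\tau$.

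Substituting and passing to the limit gives
\[
\lim_{N\to\infty}\frac{\theta_{l,N}}{N}=\frac{\tau^{2l+1}+\tau\cdot\tau^{2l+2}}{N_{l,1}+\tau N_{l,2}}=\frac{\tau^{2l+1}(1+\tau^2)}{2\bigl(f_{2l-2}+\tau f_{2l-1}\bigr)}.
\]
Using Binet's formula~\eqref{general term formula of Fibonacci} together with the identity $\tau(1-\tau)=-1$, a short calculation yields $f_{2l-2}+\tau f_{2l-1}=\tau^{2l-1}(1+\tau^2)/\sqrt{5}$, which collapses the fraction to $\tau^2\sqrt{5}/2=(\tau+1)\sqrt{5}/2$; a direct verification shows $(\tau+1)\sqrt{5}/2=(3\tau+1)/2$. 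The case $N\to-\infty$ is handled symmetrically by running the argument on the left half of the configuration.

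The main obstacle is justifying $p(N)/q(N)\to 1/\tau$ with enough uniformity that the $O(1)$ boundary terms can be discarded safely. This is precisely the content of uniform patch frequency for the Fibonacci chain, and rests on the Perron-Frobenius structure of the primitive substitution matrix $M$, whose dominant eigenvalue $\tau^2$ and Perron eigenvector proportional to $(1,\tau)^{\top}$ govern the limiting proportions of $A_l$- and $B_l$-patches in any sufficiently long piece of $S$.
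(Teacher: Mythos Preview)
Your argument is essentially identical to the paper's: both count complete loops $p(N),q(N)$ (denoted $n_{l,1},n_{l,2}$ in the paper) around $\gamma_{l,1},\gamma_{l,2}$, sandwich $\theta_{l,N}$ and $N$ between linear combinations of these counts with bounded error, and then pass to the limit using the absolute frequencies of $A_l,B_l$. The only cosmetic difference is that the paper writes the limit directly as $1/(\freq_0(A_l)N_{l,1}+\freq_0(B_l)N_{l,2})$ rather than first isolating the ratio $p(N)/q(N)\to 1/\tau$, and the paper skips the Binet computation you spell out.
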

	
	\begin{proof}
	Let $n_{l,i}$ be the number of times $\pi_l([\theta_{l,0},\theta_{l,n}])$ covers completely the circle $\gamma_{l,i}$. Then \[n_{l,1}\tau^{2l+1}+n_{l,2}\tau^{2l+2}\leq\theta_{l,n}-\theta_{l,0}\leq n_{l,1}\tau^{2l+1}+n_{l,2}\tau^{2l+2}+2\tau^{2l+2}\] and \[n_{l,1}N_{l,1}+n_{l,2}N_{l,2}\leq n\leq n_{l,1}N_{l,1}+n_{l,2}N_{l,2}+2N_{l,2}.\] Thus \[\frac{n_{l,1}\tau^{2l+1}+n_{l,2}\tau^{2l+2}}{n_{l,1}N_{l,1}+n_{l,2}N_{l,2}+2N_{l,2}}\leq \frac{\theta_{l,n}}{n}\leq\frac{n_{l,1}\tau^{2l+1}+n_{l,2}\tau^{2l+2}+2\tau^{2l+2}}{n_{l,1}N_{l,1}+n_{l,2}N_{l,2}}\]
	Then the rotation number $\rho_l$ of $(\theta_{l,n})_{n\in\Z}$ is the limit (if exists): \[\lim_{n\to+\infty}\frac{n_{l,1}\tau^{2l+1}+n_{l,2}\tau^{2l+2}}{n_{l,1}N_{l,1}+n_{l,2}N_{l,2}}.\]   When $n$ goes to $+\infty$ the quantity \[\frac{n_{l,i}}{n_{l,1}\tau^{2l+1}+n_{l,2}\tau^{2l+2}}\] goes to the absolute  frequency of $A_{l}$ if $i=1$ or of $B_{l}$ if $i=2$. Hence \[\rho_l=\frac{1}{\operatorname{Freq}_0(A_l)N_{l,1}+\operatorname{Freq}_0(B_l)N_{l,2}}=\frac{3\tau+1}{2}. 
	\]
	\end{proof}
	
\begin{lemma}\label{lemma: compact conf}
	There exists $M>0$ such that for any $l\geq 1$, $n\in\Z$, we have \[\theta_{l,n+1}-\theta_{l,n}\leq M.\]
\end{lemma}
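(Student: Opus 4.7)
\textbf{Proof plan for Lemma~\ref{lemma: compact conf}.} Every difference $\theta_{l,n+1}-\theta_{l,n}$ is, by construction of $(\theta_{l,n})_n$ via the covering $\pi_l$, an arc-length gap of the sheet-minimal closed loop on one of the two circles $\gamma_{l,1},\gamma_{l,2}$. The plan is therefore to bound, uniformly in $l$ and $i$, the maximal gap of these minimizing loops. A preliminary step is the elementary bound on average spacing: since $L_{l,i}\in\{\tau^{2l+1},\tau^{2l+2}\}$, $N_{l,i}\in\{2f_{2l-2},2f_{2l-1}\}$, and $f_n\sim\tau^{n+1}/\sqrt{5}$ by~\eqref{general term formula of Fibonacci}, the quotient $L_{l,i}/N_{l,i}$ converges to $(3\tau+1)/2$, and in particular there exists a constant $\rho^{*}<\infty$ with $L_{l,i}/N_{l,i}\leq\rho^{*}$ for every $l,i$.

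\textbf{Exchange argument.} Fix $l,i$, set $N:=N_{l,i}$ and $L:=L_{l,i}$, and label the atoms of the minimizing loop on $\gamma_{l,i}$ cyclically as $p_0=R_l,p_1,\ldots,p_{N-1}$ with arc-length gaps $g_r=d(p_r,p_{r+1})$, $\sum g_r=L$. Let $g_k$ be the largest gap and $g_{i^{*}}$ the smallest, so $g_{i^{*}}\leq L/N\leq\rho^{*}$. Since only $p_0=R_l$ is fixed, at least one endpoint of the minimal-gap arc is a free atom $p_j$, and one of $g_{j-1},g_j$ equals $g_{i^{*}}$ while the other is at most $g_k$; hence $g_{j-1}g_j\leq\rho^{*}g_k$. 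Build a competing $N$-atom loop by deleting $p_j$ (merging $g_{j-1},g_j$ into a single gap of length $g_{j-1}+g_j$) and inserting a new atom $q$ in the interior of the gap $g_k$ at its midpoint (splitting it into two gaps of length $g_k/2$). Minimality of the original yields
\[
0\;\leq\;\Delta E\;=\;g_{j-1}g_j-V(p_j)-\tfrac14 g_k^{2}+V(q),
\]
and because $0\leq V\leq \|V\|_\infty$ this rearranges to the quadratic inequality $g_k^{2}/4\leq\rho^{*}g_k+\|V\|_\infty$, whose solution gives
\[
g_k\;\leq\;2\rho^{*}+2\sqrt{(\rho^{*})^{2}+\|V\|_\infty}.
\]
This bound is independent of $l,i,n$, and we let $M$ be this number.

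\textbf{Main obstacle and corner case.} The delicate step is the legality of the exchange: the removed atom $p_j$ must differ from the fixed basepoint $R_l$, which is guaranteed because only one of the $N\geq 2$ atoms is fixed, so at least one endpoint of the minimal-gap arc is free; and the inserted atom $q$ must lie in the open interior of the big arc, which is immediate. The argument above works as soon as $N\geq 3$, that is, for all $l\geq 2$. The only remaining case $N=2$ occurs only at $l=1$: then the two circles have circumferences $\tau^{3},\tau^{4}$ and each gap is trivially at most $\tau^{4}$, so enlarging $M$ if necessary to $\max\{M,\tau^{4}\}$ yields the uniform bound for all $l\geq 1$.
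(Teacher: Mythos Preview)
Your argument is correct and takes a genuinely different route from the paper's. The paper proves Lemma~\ref{lemma: compact conf} by contradiction: assuming gaps are unbounded, it pushes an arbitrarily large gap down through the maps $\kappa_m$ so that some entire $B_{m-1}$ is covered by a single interstitial interval, and then invokes Proposition~\ref{prop: the  combinatorics} (the Aubry--Mather combinatorics of minimal segments) to force $n_{l(m),1}N_{l(m),1}+n_{l(m),2}N_{l(m),2}\leq 3(n_{l(m),1}+n_{l(m),2}+2)$ for all $m$, a contradiction. Your proof, by contrast, is a direct and fully elementary variational exchange at the level of a single circle: remove a free atom adjacent to the smallest gap and reinsert it at the midpoint of the largest gap; minimality then yields a quadratic inequality in $g_k$ with coefficients depending only on $\rho^{*}$ and $\|V\|_\infty$. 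Your approach avoids Proposition~\ref{prop: the  combinatorics} entirely and produces an explicit numerical bound $M=2\rho^{*}+2\sqrt{(\rho^{*})^{2}+\|V\|_\infty}$, which the paper's argument does not. The paper's approach, on the other hand, is closer in spirit to the later use of Proposition~\ref{prop: the  combinatorics} in bounding the rotation number of the limit $(\theta_{\infty,n})$, so it fits more organically into the surrounding narrative.

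One small point you should tighten: your formula $\Delta E=g_{j-1}g_j-V(p_j)-\tfrac14 g_k^{2}+V(q)$ tacitly assumes that the deletion and the insertion affect disjoint pairs of gaps, i.e.\ that $k\notin\{j-1,j\}$. When $N\geq 3$ is small or the extremal gaps happen to be adjacent, it can occur that the big gap $g_k$ shares $p_j$ as an endpoint. In that case the two operations overlap and the spring-energy change becomes $\tfrac12 g_{j-1}g_k-\tfrac14 g_k^{2}$ (or the symmetric expression), which is \emph{smaller} than $g_{j-1}g_j$, so your quadratic inequality still holds and in fact improves. Adding one sentence to cover this overlap would make the exchange argument airtight.
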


\begin{proof}
Let $M(m)=2|\B_m|=2\tau^{2m+3}$, where $m\geq 2$. Suppose by contradiction that there exist $l(m)$ and $n(m)$ such that \[\theta_{l(m),n(m)+1}-\theta_{l(m),n(m)}> M(m).\] Then there exists minimal segment \[(\theta_{l(m),n_1}, \dots, \theta_{l(m),n(m)},\theta_{l(m),n(m)+1}, \dots, \theta_{l(m),n_2})\] such that 
   \[\gamma_{m,i}\subset \pi_m([\theta_{l(m),n(m)},\theta_{l(m),n(m)+1}]) \text{ for some } i.\]
 Let $n_{l(m),i}$ be the number of times $\pi_{m-1}([\theta_{l(m),0},\theta_{l(m),n}])$ covers completely the circle $\gamma_{m-1,i}$. Since $\kappa_{m-1}(\gamma_{m,i})=\B_{m-1}$, let $\kappa_{m-1}$ act on each side then we have \[\B_{m-1}\subset \pi_{m-1}([\theta_{l(m),n(m)},\theta_{l(m),n(m)+1}]).\] 
  By Corollary \ref{coro: the  combinatorics}, for each connected component $I_{m-1,i}$ of $\pi_{m-1}^{-1}(\gamma_{m-1,i})$, we have \[\left|\operatorname{Card}(\theta_{l(m)}\cap I_{m-1,i} )\right|\leq 2.\] 
  Since $\theta_{l(m)}$ has rotation number $(3\tau+1)/2$, for all $m\geq 2$, we have  \begin{align*}
   (3\tau+1)/2&=\lim_{n\to+\infty}\frac{\theta_{l(m),n}}{n}\\ 
   &\geq \lim_{n\to+\infty}\frac{n_{l(m),1}\tau^{2m-1}+n_{l(m),2}\tau^{2m}}{2(n_{l(m),1}+1)+2(n_{l(m),2}+1)}\\ 
   &=\tau^{2m-1} \lim_{n\to\infty}\frac{n_{l(m),1}+\tau n_{l(m),2}}{2n_{l(m),1}+2 n_{l(m),2}+4} .
  \end{align*}
 Notice that $\lim_{n\to\infty}\frac{n_{l(m),1}+\tau n_{l(m),2}}{2n_{l(m),1}+2 n_{l(m),2}+4}$ equals either $1/2$ or $\tau/2$ and so the right side of the inequality goes to $+\infty$ when $m\rightarrow +\infty$, which is impossible.
 \end{proof}

By Lemma~\ref{lemma: compact conf}, for each $l\geq1$,  the distance $|\theta_{l,n+1}-\theta_{l,n}|$ is bounded for all $n\in\Z$. Therefore, all the configurations $((\theta_{m,n})_{n\in\Z})_{m\in\mathbb{Z}^+}$ are contained in a compact subset of $\R^\Z$, which guarantees that there exists at least one accumulation point as $m\rightarrow +\infty$. We denote by $(\theta_{\infty,n})_{n\in\Z}$ any of these accumulation points.
	
\begin{theorem}
	The configuration $(\theta_{\infty,n})_{n\in\Z}$ is a minimal configuration with rotation number $(3\tau+1)/2$.
\end{theorem}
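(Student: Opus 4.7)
The plan is to verify the two claims — minimality and rotation number $(3\tau+1)/2$ — in turn, by transferring each property from the approximating sequence $(\theta_{l,n})_{n\in\Z}$ through a compactness-continuity argument. Fix once and for all a subsequence $l_m \to \infty$ along which $\theta_{l_m,n} \to \theta_{\infty,n}$ for every $n\in\Z$; such a subsequence exists by Lemma \ref{lemma: compact conf} together with a diagonal extraction.

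For minimality, I would take an arbitrary finite segment $(\theta_{\infty,j},\ldots,\theta_{\infty,k})$ together with a competitor $(x_j^*,\ldots,x_k^*)$ sharing the two endpoints. The sheet lengths $\tau^{2l_m+1}, \tau^{2l_m+2}$ grow without bound while the window $[\theta_{l_m,j},\theta_{l_m,k}]$ stays bounded by Lemma \ref{lemma: compact conf}; hence for $m$ large enough this window lies inside a single sheet over some $\gamma_{l_m,i}$, on which $(\theta_{l_m,n})$ is minimal by the Step 2 construction. A sub-segment of a minimal segment (with endpoints held fixed) is itself minimal, so
\[
H(\theta_{l_m,j},\ldots,\theta_{l_m,k}) \;\leq\; H(\theta_{l_m,j}, x_{j+1}^*,\ldots,x_{k-1}^*, \theta_{l_m,k}).
\]
Letting $m\to\infty$ and invoking continuity of $H$ in its finitely many arguments yields the analogous inequality with $\theta_{\infty,\bullet}$ in place of $\theta_{l_m,\bullet}$, which is exactly the minimality of the segment.

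For the rotation number I would leverage the sandwich estimate already produced in the lemma establishing $\rho_l = (3\tau+1)/2$ for each finite $l$. That estimate bounds $\theta_{l_m,n}/n$ above and below by expressions in the loop-winding numbers $n_{l_m,1}, n_{l_m,2}$ which both tend to $(3\tau+1)/2$. Combining this with Proposition \ref{prop: the  combinatorics}, which bounds by at most two the discrepancy between the count of $\theta_\infty$-atoms and the count of $\theta_{l_m}$-atoms in any fixed window, one concludes that $\theta_{\infty,n}/n \to (3\tau+1)/2$ as $n \to \pm\infty$ after letting $m\to\infty$ and $|n|\to\infty$ in an appropriate order.

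The main obstacle I anticipate is the rotation-number step, because pointwise convergence in the product topology does not automatically preserve asymptotic ratios such as $\theta_n/n$; in particular one must rule out adjacent atoms of $(\theta_{\infty,n})$ coalescing or the indexing drifting at infinity. To close this gap I would derive a uniform lower bound on $\theta_{l,n+1} - \theta_{l,n}$ complementing the upper bound of Lemma \ref{lemma: compact conf}, using the boundedness of $V$ and the convexity of the spring term $\tfrac{1}{2}(\xi-\eta)^2$ together with the minimality established in the preceding step.
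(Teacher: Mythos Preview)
Your minimality argument is correct and, in fact, more explicit than the paper's, which simply declares this step ``obvious''. The observation that any fixed window $[\theta_{l_m,j},\theta_{l_m,k}]$ is eventually contained in a single sheet, together with continuity of $H$ on finite segments, is exactly the right mechanism.

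The rotation-number step, however, does not close. You correctly diagnose the obstacle --- pointwise convergence in the product topology says nothing about $\theta_n/n$ as $|n|\to\infty$ --- but neither of your proposed remedies addresses it. First, Proposition~\ref{prop: the  combinatorics} compares atom counts of a \emph{single} minimal configuration across translated windows; it does not compare the counts of two different configurations $(\theta_{l_m,\cdot})$ and $(\theta_{\infty,\cdot})$ in the same window, so the ``discrepancy at most two'' you invoke is not what the proposition delivers. Second, a uniform lower bound on consecutive gaps still does not let you pass rotation numbers through product-topology limits: one can build sequences with gaps uniformly in $[1,2]$, each member having rotation number $1$, whose limit has rotation number $2$ (e.g.\ set gaps equal to $2$ on $[-l,l]$ and equal to $1$ outside). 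So the ``appropriate order'' of limits you allude to cannot be made to work along these lines.

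The paper's route avoids the transfer entirely. Once minimality of $(\theta_{\infty,n})$ is in hand, apply Proposition~\ref{prop: the  combinatorics} \emph{directly to the limit configuration}: for each fixed level $l$, every sheet over $\gamma_{l,i}$ carries between $N_{l,i}-2$ and $N_{l,i}+2$ atoms of $\theta_\infty$. This yields a sandwich for $\theta_{\infty,n}/n$ of the form
\[
\frac{1}{\freq_0(A_l)(N_{l,1}+2)+\freq_0(B_l)(N_{l,2}+2)} \;\le\; \rho_\infty \;\le\; \frac{1}{\freq_0(A_l)(N_{l,1}-2)+\freq_0(B_l)(N_{l,2}-2)},
\]
valid for every $l$; sending $l\to\infty$ squeezes $\rho_\infty$ to $(3\tau+1)/2$. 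The point is that the level $l$ here is decoupled from the approximation index $l_m$ --- you no longer need the approximants at all once minimality of the limit is established.
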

	
\begin{proof}
		It is easy to show  that $(\theta_{\infty,n})_{n\in\Z}$ is minimal.  In fact, for any finite segment $(\theta_{\infty,j}, \cdots, \theta_{\infty,k}),  k>j$ which is a limit point of minimal segments of $(\theta_{m,n})_{n=j}^k$ as $m\rightarrow +\infty$, it is straightforward to show that this segment is also minimal.
		
		We just need to show its rotation number is $(3\tau+1)/2$.
	Let $n_{\infty,l,i}$ be the number of times $\pi_l([\theta_{\infty,0},\theta_{\infty,n}])$ covers completely the circle $\gamma_{l,i}$. Then \[n_{\infty,l,1}\tau^{2l+1}+n_{\infty,l,2}\tau^{2l+2}\leq\theta_{\infty,n}-\theta_{\infty,0}\leq n_{\infty,l,1}\tau^{2l+1}+n_{\infty,l,2}\tau^{2l+2}+2\tau^{2l+2}\] and by Corollary \ref{coro: the  combinatorics}, we have
	\[n_{\infty,l,1}(N_{l,1}-2)+n_{\infty,l,2}(N_{l,2}-2)\leq n\leq n_{\infty,l,1}(N_{l,1}+2)+n_{\infty,l,2}(N_{l,2}+2)+2(N_{l,2}+2).\] Thus \begin{align*}
    &\frac{n_{\infty,l,1}\tau^{2l+1}+n_{\infty,l,2}\tau^{2l+2}}{n_{\infty,l,1}(N_{l,1}+2)+n_{\infty,l,2}(N_{l,2}+2)+2(N_{l,2}+2)} \\ &\leq \frac{\theta_{\infty,n}}{n}\\ &\leq\frac{n_{\infty,l,1}\tau^{2l+1}+n_{\infty,l,2}\tau^{2l+2}+2\tau^{2l+2}}{n_{\infty,l,1}(N_{l,1}-2)+n_{\infty,l,2}(N_{l,2}-2)}.
  \end{align*} 
	When $n$ goes to $+\infty$, the quantity \[\frac{n_{\infty,l,i}}{n_{\infty,l,1}N_{l,1}+n_{\infty,l,2}N_{l,2}}\] goes to the absolute  frequency of $A_{l}$ if $i=1$ and of $B_{l}$ if $i=2$. Then the rotation number $\rho_\infty$ of $(\theta_{\infty,n})_{n\in\Z}$ satisfies \begin{align*}
    &\frac{1}{\operatorname{Freq}_0(A_l)(N_{l,1}+2)+\operatorname{Freq}_0(B_l)(N_{l,2}+2)}\\ &\leq \rho_\infty \\ &\leq\frac{1}{\operatorname{Freq}_0(A_l)(N_{l,1}-2)+\operatorname{Freq}_0(B_l)(N_{l,2}-2)}
	\qquad \forall ~l\geq 1.
  \end{align*}
	Let $l\rightarrow+\infty$, and we obtain
	\[\rho_\infty=\frac{3\tau+1}{2}.\]
\end{proof}

\subsection{Equilibrium configurations 
close to the anti-integrable limit}\label{existence of equi conf}

In this section, we will first find an equilibrium configuration $(u_i)_{i\in\Z}$ with rotation number $(3\tau+1)/2$ satisfying
\begin{equation}\label{eq:equilibrium-condition}
-(\Delta u)_i+ V'(u_i)=0,
\end{equation}
where $\Delta$ denotes the discrete Laplacian. Then a specific calculation of such configuration will be given.

To start with, let $h:\Z\rightarrow\R$ be defined by
 $$h(i)=\frac{3\tau+1}{2} i$$ 
 and let $g:\Z\rightarrow\R$ be defined by
  $$g(i)=\argmin_{x\in S} |x-h(i)|=\begin{cases}
\alpha\circ h(i) &\text{ if } 2h(i)\leq \alpha\circ h(i)+\beta\circ h(i)\\
\beta\circ h(i) &\text{ if } 2h(i)> \alpha\circ h(i)+\beta\circ h(i)
\end{cases}.$$ 
Since the distance between any pair of adjacent points in $S$ is $1$ or $\tau$, the closed ball with diameter $\tau$ centered at $h(i)$ must contain some point of $S$, which means that $|g(i)-h(i)|\leq\tau/2$ for all $i\in\Z$. 
Now we start to find an equilibrium configuration in our context. First we reduce the existence of equilibria to a fixed point of a contraction mapping on a closed neighborhood of $(g(i))_{i\in\Z}$ with the metric $\delta(u, u^{\prime})=\sup _{i \in \Z}|u_{i}-u_{i}^{\prime}|$. More precisely,  we consider a space $\Pi$ defined by \[\Pi:=\left\{u :|u_{i}-g(i)| \leq \frac{\tau}{62} \text { for all } i \in \mathbb{Z}\right\}\] and a mapping $\Phi$ on $\Pi$ defined by \[ \Phi:\Pi\rightarrow\Pi,\quad u\mapsto \left(-\frac{1}{128}(\Delta u)_i+g(i)\right)_{i\in\Z}.\] Notice that for all $u\in\Pi$,  we have
\begin{align*}
|(\Delta u)_i|&\leq|(\Delta u)_i-(\Delta g)_i|+|(\Delta g)_i|\leq 4\sup_{i \in \Z}|u_{i}-g(i)|+2\tau\leq\frac{64\tau}{31}.
\end{align*}
Thus, we get $|\Phi(u)_{i}-g(i)|=\frac{1}{128}|(\Delta u)_i|\leq\frac{\tau}{62}$, so the mapping $\Phi$ is well defined. Moreover, $\Phi$ is a contraction mapping since \[|\Phi(u)_i-\Phi(u^{\prime})_i|=\frac{1}{128}|(\Delta u)_i-(\Delta u')_i|\leq\frac{1}{32}\sup _{i \in \Z}|u_{i}-u_{i}^{\prime}|.\] Hence by the contraction mapping principle, $\Phi$ has a unique fixed point $u$ satisfying
\begin{equation}\label{eq:fixed-point}
u_i=-\frac{1}{128}(\Delta u)_i+g(i) \qquad \forall~i\in\Z.
\end{equation}
Now it only remains to show that formula (\ref{eq:fixed-point}) and (\ref{eq:equilibrium-condition}) are equivalent.  In fact, the above fixed point $u$  satisfies $\min\{|u_i-x|\mid x\in S\}=|u_i-g(i)|\leq \tau/62\leq 1/4$ for all $i\in \Z$, and hence $V'(u_i)=\frac{\dd}{\dd x}(-64(x-g(i))^2+160/27)|_{x=u_i}=-128(u_i-g(i))$. Then we can obtain (\ref{eq:equilibrium-condition}) by multiplying  by $128$ on  both sides of (\ref{eq:fixed-point}), by which we can conclude that $u$ is an equilibrium configuration. What's more, since $|u_i-g(i)|\leq\frac{\tau}{62}$ and $|g(i)-h(i)|<\tau/2$ hold for all $i\in\Z$, the rotation number of $u$ is  $(3\tau+1)/2$, the same as the slope of $h$.

In conclusion, we have obtained
\begin{theorem}\label{thm:equiconf}
	The configuration $u$ is  an equilibrium configuration with rotation number $\theta$.
\end{theorem}

Next, let's calculate the equilibrium configuration $u$ we constructed.
Let $\alpha=1/128$, then  (\ref{eq:fixed-point}) is equivalent to
\[\alpha u_{i-1}+(1-2\alpha) u_{i}+\alpha u_{i+1}=a_i  \qquad \forall~i\in\Z.
\]
Let $T$ be a tridiagonal operator defined by
\[T e_{i}=\alpha e_{i-1}+(1-2\alpha) e_{i}+\alpha e_{i+1}, \quad i \in \mathbb{Z}\]
where $\{e_i:i\in\Z\}$ is the orthonormal basis of the sequence space. Consider the truncation matrix
\[T_{n} =\begin{pmatrix}
{1-2\alpha} & {\alpha} & {0} & {\cdots} & {0} & {0} \\ {\alpha} & {1-2\alpha} & {\alpha} & {\cdots} & {0} & {0} \\ {0} & {\alpha} & {1-2\alpha} & {\cdots} & {0} & {0} \\ {\vdots} & {\vdots} & {\vdots} & {\ddots} & {\vdots} & {\vdots} \\ {0} & {0} & {0} & {\cdots} & {1-2\alpha} & {\alpha} \\ {0} & {0} & {0} & {\cdots} & {\alpha} & {1-2\alpha}
\end{pmatrix}_{n\times n},\]

which is invertible by  \cite{huang1997analytical}[Theorem 3.1]. And $\left\{\left\|T_{n}^{-1} e_{n}\right\|\right\}$, $\left\{\left\|T_{n}^{*^{-1}} e_{n}\right\|\right\}$ are bounded due to  \cite{BKR2006}[Corollary 6.2]. Then using \cite{antonyselvan2016}[Theorem 3.1], we have
\[u=\lim\limits_{n\to \infty}{T_{2n+1}^{-1}y_n},\] 
where $y_n=(g(-n),\dots,g(0),\dots,g(n))^T$ is the truncation of $(g(i))_{i\in\Z}$.

	\begin{remark}
	\begin{itemize}
	\item [(i)] In fact, the method of finding equilibrium configurations in this article can be applied to all $h$ satisfying $|(\Delta h)_i|<\infty$. For example, let 
	\[h_1(i)=\begin{cases}
	i^2,& \text{if} \, i\geq 0,\\
	-i^2,& \text{if} \, i< 0.
	\end{cases}\] 
	In this case, $\lim\limits_{n\to\pm\infty} h_1(i)/i=\pm\infty$ and hence the equilibrium configuration has no rotation number.
	\item [(ii)] See Figure~6 for numerical simulations of equilibrium configurations with or without rotation numbers.
	\end{itemize}
%
	
	\begin{figure}[htp]
  \centering
\def\svgwidth{0.45\columnwidth}
\begingroup%
  \makeatletter%
  \providecommand\color[2][]{%
    \errmessage{(Inkscape) Color is used for the text in Inkscape, but the package 'color.sty' is not loaded}%
    \renewcommand\color[2][]{}%
  }%
  \providecommand\transparent[1]{%
    \errmessage{(Inkscape) Transparency is used (non-zero) for the text in Inkscape, but the package 'transparent.sty' is not loaded}%
    \renewcommand\transparent[1]{}%
  }%
  \providecommand\rotatebox[2]{#2}%
  \newcommand*\fsize{\dimexpr\f@size pt\relax}%
  \newcommand*\lineheight[1]{\fontsize{\fsize}{#1\fsize}\selectfont}%
  \ifx\svgwidth\undefined%
    \setlength{\unitlength}{420bp}%
    \ifx\svgscale\undefined%
      \relax%
    \else%
      \setlength{\unitlength}{\unitlength * \real{\svgscale}}%
    \fi%
  \else%
    \setlength{\unitlength}{\svgwidth}%
  \fi%
  \global\let\svgwidth\undefined%
  \global\let\svgscale\undefined%
  \makeatother%
  \begin{picture}(1,0.75)%
    \lineheight{1}%
    \setlength\tabcolsep{0pt}%
    \put(0,0){\includegraphics[width=\unitlength,page=1]{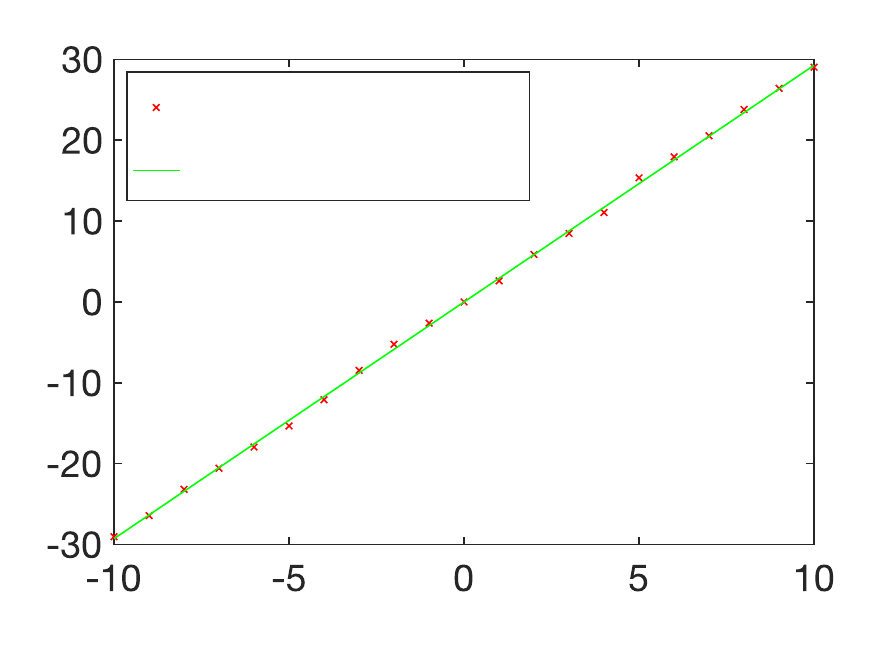}}%
    \put(0.50484314,0.03154264){\color[rgb]{0,0,0}\makebox(0,0)[lt]{\lineheight{1.25}\smash{\begin{tabular}[t]{l}$x$\end{tabular}}}}%
    \put(0.03407439,0.37831486){\color[rgb]{0,0,0}\rotatebox{89.874486}{\makebox(0,0)[lt]{\lineheight{1.25}\smash{\begin{tabular}[t]{l}$y$\end{tabular}}}}}%
    \put(0.2267768,0.61716187){\color[rgb]{0,0,0}\makebox(0,0)[lt]{\lineheight{1.25}\smash{\begin{tabular}[t]{l} {\footnotesize $T^{-1}_{1001}y_{500}$}\end{tabular}}}}%
    \put(0.2272938,0.54921559){\color[rgb]{0,0,0}\makebox(0,0)[lt]{\lineheight{1.25}\smash{\begin{tabular}[t]{l}{\footnotesize $y=(3\tau+1)/2x$}\end{tabular}}}}%
  \end{picture}%
\endgroup%

\def\svgwidth{0.45\columnwidth}
\begingroup%
  \makeatletter%
  \providecommand\color[2][]{%
    \errmessage{(Inkscape) Color is used for the text in Inkscape, but the package 'color.sty' is not loaded}%
    \renewcommand\color[2][]{}%
  }%
  \providecommand\transparent[1]{%
    \errmessage{(Inkscape) Transparency is used (non-zero) for the text in Inkscape, but the package 'transparent.sty' is not loaded}%
    \renewcommand\transparent[1]{}%
  }%
  \providecommand\rotatebox[2]{#2}%
  \newcommand*\fsize{\dimexpr\f@size pt\relax}%
  \newcommand*\lineheight[1]{\fontsize{\fsize}{#1\fsize}\selectfont}%
  \ifx\svgwidth\undefined%
    \setlength{\unitlength}{420bp}%
    \ifx\svgscale\undefined%
      \relax%
    \else%
      \setlength{\unitlength}{\unitlength * \real{\svgscale}}%
    \fi%
  \else%
    \setlength{\unitlength}{\svgwidth}%
  \fi%
  \global\let\svgwidth\undefined%
  \global\let\svgscale\undefined%
  \makeatother%
  \begin{picture}(1,0.75)%
    \lineheight{1}%
    \setlength\tabcolsep{0pt}%
    \put(0,0){\includegraphics[width=\unitlength,page=1]{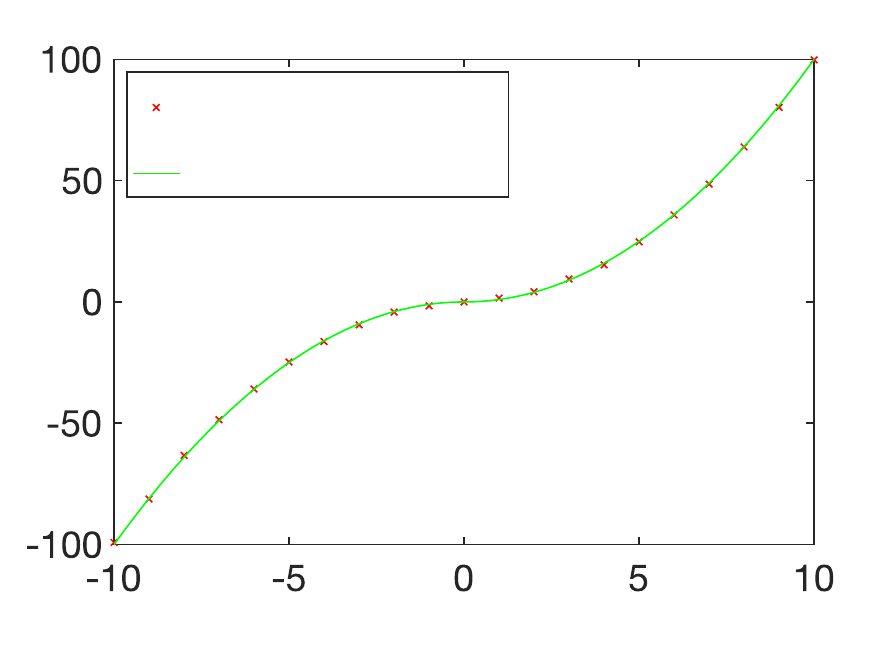}}%
    \put(0.50988235,0.03046706){\color[rgb]{0,0,0}\makebox(0,0)[lt]{\lineheight{1.25}\smash{\begin{tabular}[t]{l} $x$\end{tabular}}}}%
    \put(0.03687865,0.37704689){\color[rgb]{0,0,0}\rotatebox{89.665507}{\makebox(0,0)[lt]{\lineheight{1.25}\smash{\begin{tabular}[t]{l}$y$\end{tabular}}}}}%
    \put(0.23387805,0.61965771){\color[rgb]{0,0,0}\makebox(0,0)[lt]{\lineheight{1.25}\smash{\begin{tabular}[t]{l}{\footnotesize $T_{1001}^{-1}y_{500}$}\end{tabular}}}}%
    \put(0.23400882,0.54852627){\color[rgb]{0,0,0}\makebox(0,0)[lt]{\lineheight{1.25}\smash{\begin{tabular}[t]{l}{\footnotesize$y=\mathrm{sign}(x)x^2$}\end{tabular}}}}%
  \end{picture}%
\endgroup%

  \caption{The configuration $T_{2n+1}^{-1}y_{n}$ for $h$ and $h_1$  when $n=500$}
  \label{fig:equiconf}
 \end{figure}

\end{remark}

The following theorem is a special case of  \cite{Trevino2019}[Theorem~1].
\begin{theorem}\label{thm: existence of equi-conf}
	Let $h:\Z\rightarrow\R$ satisfy $|(\Delta h)_i|<\infty\text{ for all } i\in\Z.$ Then there exists a $\lambda_0$ such that for any $\lambda>\lambda_0$, there exists an equilibrium configuration $u$ of type $h$ with respect to \begin{equation}\label{eq:hlambda}
		H_{\lambda}(\xi,\eta)=\frac{1}{2}(\xi-\eta)^2+\lambda V(\xi).
	\end{equation}
\end{theorem}

\begin{remark}
The original paper \cite{Aubry1990} considers the notion of anti-integrable limits for the traditional periodic Frenkel-Kontorova models and obtains the same types of equilibrium configurations. The authors also show the chaotic properties of these ``exotic" equilibrium configurations.
\end{remark}

Moreover, in our context, we could show that there  also exist non-minimal equilibrium configurations:
\begin{theorem}\label{thm: nonminimal}
	Let $h:\Z\rightarrow\R$ satisfy $|(\Delta h)_i|<\infty\text{ for all } i\in\Z.$ Then there exists $\lambda_0, \lambda_1\in \mathbb{R}$ satisfying $\lambda_1>\lambda_0$ such that for any $\lambda>\lambda_0$, there exists  an equilibrium configuration $u$ of type $h$ with respect to $H_{\lambda}(\xi,\eta).$
	In particular, if $\lambda>\lambda_1$, the equilibrium configuration obtained above is non-minimal.
\end{theorem}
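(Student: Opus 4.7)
The existence statement for $\lambda>\lambda_0$ is precisely Theorem~\ref{thm: existence of equi-conf}, so the genuinely new content is non-minimality, which I would obtain by a second-variation calculation after first locating the equilibrium.

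\textbf{Step 1 (locating $u$).} I would rerun the contraction of Section~\ref{existence of equi conf} with $V$ replaced by $\lambda V$. Writing $u_i=g(i)+\epsilon_i$ and using that nearest-neighbour spacings in $S$ lie in $\{1,\tau\}$, so $|g(i)-h(i)|\leq \tau/2$ and $|(\Delta g)_i|\leq 2\tau$, together with the exact identity $\zeta'(t)=-128t$ valid on $[-1/4,1/4]$, the equilibrium equation $-(\Delta u)_i+\lambda V'(u_i)=0$ reduces, whenever $|\epsilon_i|\leq 1/4$, to
\[
\epsilon_i=-\frac{1}{128\lambda}\bigl[(\Delta\epsilon)_i+(\Delta g)_i\bigr].
\]
The right-hand side is a $\tfrac{1}{32\lambda}$-Lipschitz self-map of the ball $\{\|\epsilon\|_\infty\leq r\}$ as soon as $r\geq 2\tau/(128\lambda-4)$, so for all $\lambda$ larger than some threshold $\lambda_0$ Banach's fixed-point theorem yields a unique $\epsilon$ with $|u_i-g(i)|=O(1/\lambda)$ uniformly in $i$. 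In particular $u_i$ stays inside the quadratic window of $V$ around $g(i)$, so $V''(u_i)\equiv\zeta''(u_i-g(i))=-128$.

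\textbf{Step 2 (Hessian is negative-definite).} Fix any segment $(u_j,\dots,u_k)$ with $N:=k-j-1\geq 1$. The Hessian of $\sum_{i=j}^{k-1}H_\lambda(u_i,u_{i+1})$ with respect to the free interior coordinates $(u_{j+1},\dots,u_{k-1})$ at $u$ equals
\[
-\Delta_N+\lambda\,\mathrm{diag}\bigl(V''(u_i)\bigr)=-\Delta_N-128\lambda\,I_N,
\]
where $-\Delta_N$ denotes the $N\times N$ Dirichlet tridiagonal with diagonal entries $2$ and off-diagonal entries $-1$. Its spectrum $\{2-2\cos(m\pi/(N+1)):1\leq m\leq N\}$ lies in $(0,4)$, so the Hessian is strictly negative-definite as soon as $128\lambda>4$, i.e. $\lambda>1/32$. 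Choosing any $\lambda_1>\max\{\lambda_0,1/32\}$, for every $\lambda>\lambda_1$ the equilibrium $u$ is a strict local \emph{maximum} (and in particular not a minimum) of every segment energy with endpoints fixed, so $u$ is non-minimal.

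\textbf{Expected obstacle.} The only non-routine point is verifying in Step 1 that the fixed-point iterate remains inside $[g(i)-1/4,\,g(i)+1/4]$, since $V$ is only piecewise quadratic and $V''$ fails to exist at $\pm 1/3$. The $O(1/\lambda)$ bound above settles this for sufficiently large $\lambda$, but it requires $\lambda_0$ to absorb simultaneously the contraction constant \emph{and} the invariance of this quadratic window. Once this is arranged, the negative-definiteness in Step 2 is a one-line spectral estimate and the conclusion is immediate.
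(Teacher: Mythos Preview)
Your argument is correct and the setup in Step~1 coincides with the paper's: both run the contraction of Section~\ref{existence of equi conf} with the extra care that the fixed point stays inside the quadratic window $[g(i)-1/4,g(i)+1/4]$, which forces a quantitative threshold on $\lambda$ (the paper makes this explicit as $\lambda>(2\tau+1+\sup_i|(\Delta h)_i|)/32$).

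The non-minimality step is genuinely different. The paper (Lemma~\ref{lemma: nonminimal}) first uses the aperiodicity of $S$ to produce an index $i_0$ with $u_{i_0}\neq g(i_0)$, and then builds an explicit competitor $u_0'$ by a three-case analysis depending on where $\bar u=(u_{-1}+u_1)/2$ lies relative to $g(0)$, checking by hand that $H_\lambda(u_{-1},u_0')+H_\lambda(u_0',u_1)<H_\lambda(u_{-1},u_0)+H_\lambda(u_0,u_1)$. Your Hessian computation replaces all of this: once every $u_i$ sits in the quadratic part, $V''(u_i)\equiv -128$ and the segment Hessian is $-\Delta_N-128\lambda I_N$, negative definite for $\lambda>1/32$, so $u$ is a strict local maximum of every segment energy and hence non-minimal. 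This is shorter, avoids the case split, and does not need the preliminary step $u_{i_0}\neq g(i_0)$ (the Hessian is negative definite regardless). Incidentally, since non-minimality only requires \emph{one} bad segment, taking $N=1$ already gives the conclusion for $\lambda>1/64$, sharper than the $1/32$ both you and the paper record. The paper's approach, on the other hand, produces an explicit energy-lowering move and is closer in spirit to the variational picture used elsewhere in the paper.
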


To show Theorem \ref{thm: nonminimal}, we just need a lemma:

\begin{lemma}\label{lemma: nonminimal}
	For any $\lambda>-4/\zeta''(0)$, if $u$ is an equilibrium configuration with respect to $H_{\lambda}$	and each component $u_i$ lies in the quadratic part of $V$  (that is, for each $u_i$, there exists a neighborhood $U$ of $u_i$ such that $V|_{U}$ is a quadratic function on $U$), then $u$ is non-minimal.
\end{lemma}

\begin{remark}
	For the function $V$ defined in Section \ref{EP}, it is easy to see that a point $x$ lies in the quadratic part  of $V$ if and only if $\min_{y\in S}|x-y|<1/4$, where $S$ is the Fibonacci quasicrystal. 
\end{remark}

\begin{proof}
	Fix $\lambda$ and $u$ that meet the conditions. Since each component $u_i$ lies in the quadratic part of $V$, we can regard the sequence $g(i)$ as the closest local maximum  point of $V$ to $u_i$. Notice that 
	\begin{equation}\label{eq: equi_config_lambda}
	-(\Delta u)_i+ \lambda V'(u_i)=0
	\end{equation}
	and $V'(g(i))=0$ for all $i\in\Z$, then there exists some index $i_0$ such that $u_{i_0}\ne g(i_0)$. Otherwise $(\Delta u)_i=\lambda V'(u_i)=\lambda V'(g(i))=0$ and thus $u_{i+1}-u_i$ is a positive constant for all $i\in\Z$. Since each $u_i=g(i)$ as a local maximum  point of $V$ is contained in the Fibonacci chain $S$, and the distance between any two points in $S$ has the form $m\cdot\tau+n\cdot 1$, we suppose that $u_{i+1}-u_i=m\cdot \tau+n\cdot 1$ for some nonnegative integers $m$ and $n$. That is, the corresponding symbolic word of the geometric word $S\cap [u_i,u_{i+1}]$ contains $m$ $a$-letters  and $n$ $b$-letters. Then  the frequency of the letter $a$ in $w$ is equal to $m/(m+n)$, which is rational. However, the frequency of the letter $a$ in $S$ is $1/\tau$, which was calculated in the proof of Lemma \ref{lemma: quasi-crystal}. Without loss of generality, suppose that $i_0=0$. In order to show that $u$ is non-minimal, we show that $(u_{-1}, u_0, u_1)$ is  non-minimal. That is, we find a $u_0':=u_0'(\lambda,u)$ such that
	\begin{equation}\label{ieq: varia_condition}
	H_{\lambda}(u_{-1},u_0')+H_{\lambda}(u_0', u_1)<H_{\lambda}(u_{-1},u_0)+H_{\lambda}(u_0,u_1).
	\end{equation}
	Before we give the value of $u_0'$, there are some relations among $u_0, g(0)$ and $\bar{u}:=(u_{-1}+u_1)/2$.
	Since $u_0$ lies in the support of the quadratic part of $V$, from Taylor's formula and the condition of $\lambda$, we have
	\begin{equation}\label{ieq: deriv_condition}
    \begin{aligned}
      \lambda V'(u_0)&=\lambda V''(g(0))(u_0-g(0))\\ 
  &=\lambda \zeta''(0)(u_0-g(0))\begin{cases}
	<-4(u_0-g(0))&\text{if } u_0>g(0);\\
	>-4(u_0-g(0))&\text{if } u_0<g(0).
	\end{cases}
    \end{aligned}
	\end{equation}
	Then from (\ref{eq: equi_config_lambda}) and (\ref{ieq: deriv_condition}), we know that 
	\begin{equation}\label{ieq: order1}
	\bar{u}=u_0+\frac{\lambda V'(u_0)}{2}\begin{cases}
	<-u_0+2g(0)&\text{if } u_0>g(0);\\
	>-u_0+2g(0)&\text{if } u_0<g(0).
	\end{cases}
	\end{equation}	
	That is, 
	\begin{equation}\label{ieq: order2}
	|u_0-g(0)|<|\bar{u}-g(0)|.
	\end{equation}
	And from (\ref{ieq: order1}) we can see that the sign of $u_0-g(0)$ is different from the sign of $\bar{u}-g(0)$, thus we have \begin{equation}\label{ieq: order3}
	|\bar{u}-g(0)|<|\bar{u}-u_0|.
	\end{equation}
	Now, we discuss three different cases and give the value of $u_0'$ in each case. 
	\begin{description}
		\item [Case 1.] If $|\bar{u}-g(0)|\leq 1/2$, where $1/2$ is half of the minimal distance between two adjacent local maximum points of $V$. 
		Let $u_0'=\bar{u}$. Consider the function $F(x)=\frac{1}{2}(u_{-1}-x)^2+\frac{1}{2}(x-u_1)^2$. $F(x)$ is a quadratic function and takes minimum at $\bar{u}$. Hence, 		\begin{equation}\label{ieq: convex1}
		\frac{1}{2}(u_{-1}-u_0')^2+\frac{1}{2}(u_0'-u_1)^2\leq\frac{1}{2}(u_{-1}-u_0)^2+\frac{1}{2}(u_0-u_1)^2.
		\end{equation} 
		
		Since $|\bar{u}-g(0)|\leq 1/2$ and $V$ is 
		increasing on the interval $[g(0)-1/2, g(0)]$,  decreasing on the interval $[g(0), g(0)+1/2]$, and especially, strictly monotone on its quadratic parts, by (\ref{ieq: order2}), we have 
		\begin{equation}\label{ieq: potential1}
		V(u_0')< V(u_0).
		\end{equation}
		Multiply both sides of the inequality (\ref{ieq: potential1}) by $\lambda$ and add the inequality (\ref{ieq: convex1}), then we have (\ref{ieq: varia_condition}).
		
		\item [Case 2.] If $\bar{u}-g(0)> 1/2$. Let $u_0'=g(0)+ 1/3$, where $1/3$ is the radius of each bump of $V$. Then \begin{equation}\label{ieq: potential2}
		V(u_0')=0\leq V(u_0).
		\end{equation}
		By (\ref{ieq: order3}), we have $$|\bar{u}-u_0'|=|\bar{u}-g(0)-1/3|<|\bar{u}-g(0)|<|\bar{u}-u_0|.$$ Then using the monotonicity of $F(x)$, we have \begin{equation}\label{ieq: convex2}
		\frac{1}{2}(u_{-1}-u_0')^2+\frac{1}{2}(u_0'-u_1)^2<\frac{1}{2}(u_{-1}-u_0)^2+\frac{1}{2}(u_0-u_1)^2.
		\end{equation} 
		Multiply both sides of the inequality (\ref{ieq: potential2}) by $\lambda$ and add the inequality (\ref{ieq: convex2}), then we have (\ref{ieq: varia_condition}).
		
		\item [Case 3.] If $\bar{u}-g(0)< -1/2$, let $u_0'=g(0)- 1/3$. The rest of the proof is the same as the one in Case 2. 
	\end{description}
\end{proof}

{{\vspace{-0.5em}\noindent\bf Proof of Theorem \ref{thm: nonminimal}}\quad}
	For any $h$ with $|(\Delta h)_i|<\infty\text{ for all } i\in\Z$, Theorem \ref{thm: existence of equi-conf} ensures the existence of equilibrium configuration of type $h$ with respect to $H_\lambda$ for any $\lambda$ larger than some $\lambda_0$. Let $\lambda_{\frac{1}{2}}=\max\{\lambda_0, -4/\zeta''(0)\}$. Then for any $\lambda>\lambda_{\frac{1}{2}}$, the equilibrium configuration $u$ whose existence is proved in Theorem \ref{thm: existence of equi-conf} is non-minimal by Lemma \ref{lemma: nonminimal}, as long as we have that each $u_i$ lies in the quadratic part of $V$. 

	To solve the problem of whether each $u_i$ can lie in the quadratic part of $V$, let us see the proof of Theorem \ref{thm: existence of equi-conf} carefully. In our example of $V$, a sufficient condition is that the radius of the space that the contraction mapping is smaller than $1/4$. A possible construction of the contraction is  \[\Pi_\lambda=\left\{u :|u_{i}-g(i)| \leq \frac{2\max\{|a|,|b|\}+\sup_{i \in \Z}|(\Delta h)_i|}{-\lambda\zeta''(0)-4} \text { for all } i \in \mathbb{Z}\right\},\]\[ \Phi_\lambda:\Pi_\lambda\rightarrow\Pi_\lambda,\quad u\mapsto \left(\frac{1}{\lambda\zeta''(0)}(\Delta u)_i+g(i)\right)_{i\in\Z}.\]
	For all $u\in\Pi_\lambda$, since
	\begin{align*}
	|(\Delta u)_i|&\leq|(\Delta u)_i-(\Delta g)_i|+|(\Delta g)_i-(\Delta h)_i|+|(\Delta h)_i|\\
	&\leq 4\sup_{i \in \Z}|u_{i}-g(i)|+4\sup_{i \in \Z}|g({i})-h(i)|+\sup_{i \in \Z}|(\Delta h)_i|\\
	&\leq 4\cdot\frac{2\max\{|a|,|b|\}+\sup_{i \in \Z}|(\Delta h)_i|}{-\lambda\zeta''(0)-4}+4\cdot\frac{\max\{|a|,|b|\}}{2}+\sup_{i \in \Z}|(\Delta h)_i|\\
	&=-\lambda\zeta''(0)\cdot\frac{2\max\{|a|,|b|\}+\sup_{i \in \Z}|(\Delta h)_i|}{-\lambda\zeta''(0)-4},
	\end{align*}
	we get \begin{align*}
	|\Phi_\lambda(u)_{i}-g(i)|=\left|\frac{1}{\lambda\zeta''(0)}(\Delta u)_i\right|\leq\frac{2\max\{|a|,|b|\}+\sup_{i \in \Z}|(\Delta h)_i|}{-\lambda\zeta''(0)-4},
	\end{align*}
	which means that $\Phi_\lambda$ is well-defined. $\Phi_\lambda$ is a contraction mapping because
	\[|\Phi_\lambda(u)_i-\Phi_\lambda(u^{\prime})_i|=\frac{1}{-\lambda\zeta''(0)}|(\Delta u)_i-(\Delta u')_i|\leq\frac{4}{-\lambda\zeta''(0)}\sup _{i \in \Z}|u_{i}-u_{i}^{\prime}|.\]
	Let the radius of $\Pi_\lambda$ be smaller than $1/4$, and we get a restriction on $\lambda$:
	\[\lambda>4\cdot\frac{2\max\{|a|,|b|\}+\sup_{i \in \Z}|(\Delta h)_i|+1}{-\zeta''(0)}\]
	Hence, let $$\lambda_1=\max\left\{\lambda_\frac{1}{2}, 4\cdot\frac{2\max\{|a|,|b|\}+\sup_{i \in \Z}|(\Delta h)_i|+1}{-\zeta''(0)}\right\},$$
	and then for any $\lambda>\lambda_1$, there exists a non-minimal equilibrium configuration of type $h$ with respect to $H_\lambda$.
  {\hfill $\square$\par}

{{\vspace{0.5em}\noindent\bf Proof of item (ii) of Theorem \ref{maintheorem}}\quad}
By Theorem \ref{thm:equiconf}, to show the item (ii) of Theorem \ref{maintheorem}, it suffices to show that $u$ is non-minimal with respect to $H_\lambda$ in (\ref{eq:hlambda})  when $\lambda=1$.

	In fact, by the definition of the function $\zeta$, we have $-4/\zeta''(0)=1/32<1$.  Due to Theorem~\ref{thm:equiconf}, we have that $u$ is an equilibrium configuration with respect to $H_{1}=H$. Since $u\in \Pi=\left\{u :|u_{i}-g(i)| \leq \frac{\tau}{62} \text { for all } i \in \mathbb{Z}\right\}$, we know that each component $u_i$ lies in the quadratic part of $V$. Using Lemma \ref{lemma: nonminimal}, $u$ is non-minimal. 
  {\hfill $\square$\par}

\subsection*{Acknowledgements}
The authors would like to thank the anonymous referees for the valuable comments and suggestions on the manuscript.
We also thank Prof. R. de la Llave for a very careful reading of the manuscript and many suggestions which helped improve a lot the presentation and exposition. X. Su is supported by the National Natural Science Foundation of China (Grant No. 11971060, 11871242).

\bibliographystyle{unsrt}  
\bibliography{references}

\medskip
Received xxxx 20xx; revised xxxx 20xx.
\medskip

\end{document}